\documentclass{amsproc}

\usepackage{amstext,amsthm,amsmath,amsfonts,amssymb,amscd,latexsym,txfonts,stmaryrd}
\usepackage{tikz}
\usepackage{color}
\usepackage{epsfig}
\usepackage{cite}
\usepackage{enumitem}
\usepackage[enableskew]{youngtab}
\usepackage{mathdots}
\usepackage{thmtools}

\usepackage[cmtip,all]{xy}

\usepackage[colorlinks=true, pdfstartview=FitV, linkcolor=blue, citecolor=blue, urlcolor=blue]{hyperref}
\usepackage[nameinlink]{cleveref}

\newcommand{\sgn}{\operatorname{sgn}}

\newcommand{\flo}[1]{\left\lfloor\frac{#1}{2}\right\rfloor}

\newcommand{\Ann}{\operatorname{Ann}}

\newcommand{\C}{{\mathbb C}}
\newcommand{\R}{{\mathbb R}}
\newcommand{\Z}{{\mathbb Z}}

\renewcommand{\P}{{\mathbb P}}

\newtheorem{theorem}{Theorem}[section]
\newtheorem{lemma}[theorem]{Lemma}
\newtheorem{proposition}[theorem]{Proposition}
\newtheorem{corollary}[theorem]{Corollary}
\newtheorem{fact}[theorem]{Fact}

\theoremstyle{definition}
\newtheorem{definition}[theorem]{Definition}
\newtheorem{example}[theorem]{Example}

\theoremstyle{remark}
\newtheorem{remark}[theorem]{Remark}

\numberwithin{equation}{section}

\begin{document}

\title[Hessians and Wronskians]{Some Remarks on Hessians and Wronskians}


\author{Chris McDaniel}
\address{Department of Mathematics, Endicott College, Beverly, Massachussetts 01915}
\email{cmcdanie@endicott.edu}


\subjclass[2020]{Primary 05A17, 15A15, Secondary 14N15}

\date{}
\dedicatory{Dedicated to Tony Iarrobino on the occasion of his $80^{th}$ birthday conference.}

\begin{abstract}
	The purpose of this note is to elaborate on the apparent connection between Wronskians and Hessians.  More generally, to a given subspace of homogeneous bivariate forms over the complex numbers, we associate two determinantal polynomials called the $W$-polynomial and the $\hat{W}$-polynomial.  We give expansion and factorization formulas for these polynomials, and study their behavior under change of coordinates and duality.  As an application, we give another proof of Iarrobino's theorem on the strong Lefschetz property for standard graded Artinian Gorenstein algebras in codimension two.   
\end{abstract}

\maketitle

\section{Introduction}
For fixed integers $r,s$ with $r\leq s+1$, we consider the $s+1$-dimensional complex vector space $\C[X,Y]_s$ consisting of homogeneous bivariate forms of degree $s$, and $r$-dimensional subspaces therein $V\subset\C[X,Y]_s$. Let $N=r(s+1-r)$.  It is convenient to consider $V$ as a point in the Grassmannian $\operatorname{Gr}_r(\C[X,Y]_s)$.  Given a basis of forms $F_1,\ldots,F_r\in V$, we define two determinantal homogeneous bivariate $N$-forms: the $W$-polynomial, denoted by $W(F_1,\ldots,F_r;X,Y)$, and the $\hat{W}$-polynomial, denoted by $\hat{W}(F_1,\ldots,F_r;X,Y)$ (\Cref{def:Wron}).
The $W$-polynomial is a homogenization of the familiar Wronskian polynomial and the $\hat{W}$-polynomial generalizes the Hessian polynomial.  As we shall see, up to a scalar multiple, these two polynomials only depend on the subspace $V$ and not on the particular choice of basis.  In fact, the $W$-polynomial and the $\hat{W}$-polynomial both define morphisms of projective varieties 
$$[W],\left[\hat{W}\right]\colon\operatorname{Gr}_r(\C[X,Y]_s)\rightarrow\P\left(\C[X,Y]_N\right)$$ from the Grassmannian of $r$-dimensional subspaces of homogeneous $s$-forms to the projective space of homogeneous $N$-forms.  It turns out, as we shall see, these two maps are one and the same.

Our main tool is a Pl\"ucker formula for the $W$- and $\hat{W}$-polynomials.
Given a basis $\{F_1,\ldots,F_r\}$ of a subspace $V\in\operatorname{Gr}_r(\C[X,Y]_s)$, denote by $M_V$ its $r\times (s+1)$ coefficient matrix with respect to the usual monomial basis.  We write $W(M_V;X,Y)=W(F_1,\ldots,F_r;X,Y)$, or sometimes just $W(V)$ if we only care about its projective class, and similarly for $\hat{W}$.  For each column indexing $r$-subset $I\subset[s]_0=\{0,1,\ldots,s\}$, let $\Delta_I(M_V)$ be the corresponding minor of $M_V$ (these are the Pl\"ucker coordinates of $V$).  
We show that the $W$ and $\hat{W}$-polynomials satisfy the following ``Pl\"ucker expansion formulas'':
\begin{align}
	\label{eq:PluckW}
	W(M_V;X,Y)=\sum_{I\in\binom{[s]_0}{r}}P_I(X,Y)\cdot\Delta_I(M_V)\\
	\label{eq:PluckhatW}
	\hat{W}(M_V;X,Y)=\sum_{I\in\binom{[s]_0}{r}}Q_I(X,Y)\cdot \Delta_I(M_V)
\end{align}   
where $P_I(X,Y)$ and $Q_I(X,Y)$ are monomial terms depending only on $I$, $r$ and $s$; see \Cref{prop:WhatWPluck}.  Our approach uses weighted path matrices, which allows a combinatorial interpretation of the coefficients of $P_I$ and $Q_I$. 
A Pl\"ucker expansion formula for the Wronskian appears in the work of Purbhoo \cite{Purbhoo}, with applications to Schubert calculus; see also \cite{Karp} and \cite{KP}.  A Pl\"ucker expansion formula for the Hessian appears in the paper \cite{MMS}, where it was used to study Hodge theory on codimension two Artinian Gorenstein algebras.  An earlier incarnation of the Pl\"ucker expansion of the Hessian also appears in Gessel's paper \cite[Theorem 16]{Gessel} in connection with symmetric functions.  

Using \Cref{eq:PluckW,eq:PluckhatW}, we not only show that the $W$-polynomial and the $\hat{W}$-polynomial define the same projective class, but we actually identify the constant of proportionality.  Specifically, we show that there exists a constant $c(r,s)$, depending only on $r$ and $s$ and independent of $V$, satisfying
\begin{equation}
	\label{eq:crs}
	W(M_V;X,Y)=c(r,s)\cdot \hat{W}(M_V;X,Y);
\end{equation}
see \Cref{cor:c}\footnote{After submitting this paper, it was kindly pointed out to us by an anonymous referee that our \Cref{cor:c} was already known to Pasch in 1874!  See \Cref{rem:Pasch} for further details.}.

As a companion to our Pl\"ucker expansion formula, we also have a factorization formula (\Cref{thm:WhatW}) for the projective classes of the $W$- and $\hat{W}$-polynomials based on the following fact (\Cref{prop:WZero}): a linear form $L\in\C[X,Y]_1$ divides $W(M_V;X,Y)$, and hence also $\hat{W}(M_V;X,Y)$, if and only if $L^r\cdot G\in V$ for some $G\in\C[X,Y]_{s-r}$.  The order of division is explained by a certain ``standard basis theorem'' for $V$ with respect to $L$ (\Cref{lem:IY}).
Our factorization formula for the $W$-polynomial, albeit with a slightly different definition, appears in the work of Iarrobino and Yam\'eogo \cite{IY}.	

We define a bilinear pairing on $\C[X,Y]_s$ and associate to each subspace $V\in\operatorname{Gr}_r(\C[X,Y]_s)$ its orthogonal complement subspace $V^\perp\in\operatorname{Gr}_{s+1-r}\left(\C[X,Y]_s\right)$.  The map $V\mapsto V^\perp$ defines an isomorphism of Grassmannians $\operatorname{Gr}_r\left(\C[X,Y]_s\right)\rightarrow\operatorname{Gr}_{s+1-r}\left(\C[X,Y]_s\right)$ (\Cref{lem:DeltaJPerp}).  We show that for each choice of matrix representative $M_V$, there is a choice of matrix representative $M_{V^\perp}$ and a constant $\kappa(r,s)$, depending only on $r$ and $s$ and independent of $V$, for which 
\begin{equation}
	\label{eq:kappa}
	W(M_{V^\perp};X,Y)=\kappa(r,s)\cdot \hat{W}(M_V;X,Y);
\end{equation} 
see \Cref{prop:WPerpHatW}.
The bilinear pairing used to define $V^\perp$ also appears in the work of Karp \cite{Karp}, and implicitly in the work of Iarrobino and Kanev \cite{IK}.

As an application, we prove the following well known result (\Cref{prop:Tony}):
\begin{theorem}
	\label{thm:TonyIntro}
	Every codimension two standard graded Artinian Gorenstein algebra has the strong Lefschetz property.
\end{theorem}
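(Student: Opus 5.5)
Via Macaulay inverse systems, I will reduce the strong Lefschetz property to the nonvanishing of the $\hat{W}$-polynomial of a suitable subspace of $\C[X,Y]_s$, and then use the factorization results above.

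\emph{Setup.} A codimension two standard graded Artinian Gorenstein algebra $A$ of socle degree $s$ is $A=\C[x,y]/\Ann(F)$ for a form $F\in\C[X,Y]_s$, with $x,y$ acting as $\partial_X,\partial_Y$. For a linear form $\ell=ax+by$ and $i\leq s/2$, the strong Lefschetz property requires multiplication $\ell^{s-2i}\colon A_i\to A_{s-i}$ to be bijective. By Gorenstein duality this holds iff the Hessian-type pairing
$$A_i\times A_i\to\C,\quad (\alpha,\beta)\mapsto \alpha\beta\ell^{s-2i}\circ F$$
is nondegenerate. Expanding $\ell^{s-2i}$ by the binomial theorem and evaluating, this pairing's determinant in a monomial basis is, up to a nonzero scalar, the $i$-th higher Hessian of $F$ evaluated at $(X,Y)=(a,b)$. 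So I must show that for each $i$ the $i$-th higher Hessian of $F$ is not identically zero; a generic $\ell$ then works simultaneously for all $i$ (finitely many nonzero polynomials).

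\emph{Identification with $\hat{W}$.} I take $V=V_i\subset\C[X,Y]_{s-i}$ to be the span of the $i$-th order partials $\partial_X^{i-j}\partial_Y^jF$, so that $V\cong A_i^\vee$. The $\hat{W}$-polynomial of \Cref{def:Wron}, generalizing the Hessian, should be exactly the $i$-th Hessian when $r=\dim A_i$. I will need to check that when $\dim A_i<i+1$ (dependent partials), the Hessian is the one taken over a basis of $A_i$, and this matches $\hat{W}(V)$ with $r=\dim V$. By \Cref{cor:c}, $\hat{W}(V)$ is a nonzero multiple of $W(V)$. By the factorization/zero criterion (\Cref{prop:WZero}, \Cref{thm:WhatW}), $W(V)$ is a nonzero form provided $V$ is not degenerate. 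Equivalently, by the Plücker expansion \eqref{eq:PluckW} it is nonzero since some Plücker coordinate is nonzero and the $P_I$ are distinct monomials with nonzero coefficients. This last point is the step to verify carefully. Alternatively, the Wronskian of linearly independent forms is never identically zero, so $W(V)\neq0$ directly.

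\emph{Main obstacle.} The main obstacle is the first identification: confirming that the determinant of the Lefschetz map $\ell^{s-2i}\colon A_i\to A_{s-i}$ equals, up to a nonzero constant, $\hat{W}(V_i;a,b)$ with the correct indexing and $r\le s-i+1$. This requires matching bases of $A_i$ with a basis of $V_i$ via $\alpha\mapsto\alpha\circ F$, and handling the case $\dim A_i<i+1$ by choosing monomial bases. Once that is in place, nonvanishing follows from \Cref{cor:c} together with the classical nonvanishing of Wronskians of independent polynomials. If that matching proves awkward, I would instead use \Cref{prop:WZero}. A root $L$ of $\hat W$ corresponds to $L^r G\in V$, and taking $\ell$ not proportional to the finitely many such $L$ gives bijectivity.
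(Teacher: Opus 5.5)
\textbf{There is a gap in how you handle degrees with $\dim A_i<i+1$.} Your main line is the paper's first proof: identify the matrix of $\ell^{s-2i}\colon A_i\to A_{s-i}$ with $\operatorname{Hess}_i(F)(a,b)$ (Watanabe), then use $\hat W(V_i)=\pm\det\operatorname{Hess}_i(F)\not\equiv 0$ via \Cref{cor:c} and B\^ocher. The trouble is the step you flag for $\dim A_i<i+1$: the proposed matching is false.

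The $\hat W$-polynomial of an $r$-dimensional space differentiates to order $r-1$, not $i$. So for $r=\dim V_i<i+1$, $\hat W(V_i)$ has degree $r(s-i+1-r)$, with your $s$ the socle degree. The Hessian restricted to a monomial basis of $A_i$ is built from order-$i$ operators and has degree $r(s-2i)$. These are different polynomials, and nonvanishing of one says nothing directly about the other.

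The paper never enters this case. By Macaulay's theorem the Hilbert function is $(1,2,\ldots,t,\ldots,t,\ldots,2,1)$, so it treats only $i\le t-1$. There the $i+1$ partials are independent and $\hat W(V_i)$ really is $\pm\det\operatorname{Hess}_i(F)$. The flat range then follows because $\ell^{s-2(t-1)}\colon A_{t-1}\to A_{s-t+1}$ factors as $\ell^{i-t+1}\circ\ell^{s-2i}\circ\ell^{i-t+1}$ through spaces all of dimension $t$.

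Your Pl\"ucker argument for $W(V)\not\equiv 0$ is also wrong as stated. $P_I$ depends only on $|\lambda(I)|$, so distinct $I$ share monomials and cancellation is not ruled out. The B\^ocher route (\Cref{cor:nonzero}) is the one that works.

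\textbf{Your fallback via \Cref{prop:WZero} does give a valid proof for every $i$, including the flat range, but you omit the link it depends on.} For $\ell=ax+by$, let $L_\ell=aY-bX$, the linear form killed by $\ell$. In coordinates $(T,L_\ell)$ with $\ell\circ T=1$, $\ell$ acts as $\partial_T$. Hence for $0\neq G=\alpha\circ F\in V_i$ of degree $s-i$, $\ell^{s-2i}\circ G=0$ if and only if $L_\ell^{\,i+1}\mid G$.

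Since $r=\dim V_i\le i+1$, if $L_\ell\nmid W(V_i)$ then no nonzero element of $V_i$ is divisible by $L_\ell^{\,r}$. So $\ell^{s-2i}$ is injective on $A_i$, hence bijective because $\dim A_i=\dim A_{s-i}$. Since $W(V_i)\not\equiv 0$ by B\^ocher, it has finitely many linear factors. The bad $\ell$ are those for which $L_\ell$, not $\ell$ itself, is proportional to one of these factors.

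This variant applies \Cref{prop:WZero} to $V_i$ and uses only one direction of it. The paper's dual proof instead applies it to $V^\perp=\theta(\Ann(F)_{d+1-r})$. That space has dimension exactly the exponent $d-2(r-1)$, which gives an ``if and only if'' characterization of the bad $\ell$.
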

\Cref{thm:TonyIntro} appears in Iarrobino's memoir \cite{I}.  Here we offer two proofs of \Cref{thm:TonyIntro}: one using \Cref{eq:crs}, and a dual version using \Cref{eq:kappa}.  The latter is more or less Iarrobino's original proof.

This paper is organized as follows.  In \Cref{sec:WhatW} we define $W$-polynomials and $\hat{W}$-polynomials.  In \Cref{sec:Plucker} we give the Pl\"ucker expansion formulas and prove our Pl\"ucker expansion formulas in \Cref{prop:WhatWPluck}, as well as \Cref{eq:crs} in \Cref{cor:c}.  In \Cref{sec:Props}, we prove \Cref{prop:WZero} and establish the factorization formula in \Cref{thm:WhatW}.  In \Cref{sec:Dual} we define a bilinear pairing on $\C[X,Y]_s$, we prove \Cref{lem:DeltaJPerp} and we establish \Cref{eq:kappa} in \Cref{prop:WPerpHatW}.   Finally, in \Cref{sec:ExApp}, we compute an example and prove \Cref{thm:TonyIntro} (\Cref{prop:Tony}). 


\section{The $W$-Polynomial and the $\hat{W}$-Polynomial}
\label{sec:WhatW}
Fix positive integers $r,s$ with $r\leq s+1$, and let $F_1,\ldots,F_r\in\C[X,Y]_s$ be $r$ bivariate homogeneous $s$-forms.  
\begin{definition}
	\label{def:Wron}
	Define the \emph{$W$-polynomial} to be the rational function
	\begin{equation}
		\label{eq:Wron}
		W(F_1,\ldots,F_r;X,Y)=\frac{1}{Y^{r(r-1)/2}}\cdot\det\left(\left( \frac{\partial ^{i-1}F_j}{\partial X^{i-1}}\right)_{1\leq i,j\leq r}\right)
	\end{equation}
	Define the \emph{$\hat{W}$-polynomial} to be the determinantal polynomial
	\begin{equation}
		\label{eq:Wrong}
		\hat{W}(F_1,\ldots,F_r;X,Y)=\det\left(\left(\frac{\partial^{r-1}F_j}{\partial X^{i-1}\partial Y^{r-i}}\right)_{1\leq i,j\leq r}\right).
	\end{equation}
\end{definition} 
Clearly the $\hat{W}$-polynomial is a homogeneous polynomial of degree $r(s+1-r)$.  We will show that the $W$-polynomial is also a homogeneous polynomial of the same degree, but first we give some preliminary remarks.

First, note that the specialization of the $W$-polynomial via $Y\mapsto 1$ yields the usual Wronskian:
$$\operatorname{Wr}(F_1(X,1),\ldots,F_r(X,1))=\left(\left(\frac{d^{i-1}F_j(X,1)}{d X^{i-1}}\right)_{1\leq i,j\leq r}\right)=W(F_1,\ldots,F_r;X,1).$$

Also note that if there exists another homogeneous polynomial $F=F(X,Y)$ of degree $d=s+r-1$, and if we define 
$$F_j=\frac{\partial^{r-1}F}{\partial X^{j-1}\partial Y^{r-j}}, \ 1\leq j\leq r,$$
then the $\hat{W}$-polynomial is the $(r-1)^{st}$ Hessian determinant of $F$, i.e.
$$\hat{W}(F_1,\ldots,F_r;X,Y)=\det\left(\left(\frac{\partial ^{2(r-1)}F}{\partial X^{i+j-2}\partial Y^{2r-i-j}}\right)_{1\leq i,j\leq r}\right)=\det\left(\operatorname{Hess}_{r-1}(F)\right).$$

The linearity of the partial differential operators $\partial^{i-1}/\partial X^{i-1}$ and $\partial^{r-1}/\partial X^{i-1}\partial Y^{r-i}$ implies that both the $W$-polynomial and the $\hat{W}$-polynomial depend only on the subspace $V$ spanned by the $F_1,\ldots,F_r$, and not on the polynomials themselves, at least up to a scalar multiple.  Specifically, if $F_1',\ldots,F_r'\in V$ are other forms such that $F_j'=\sum_{i=1}^ra_{ij}F_i$, where the matrix $A=(a_{ij})_{1\leq i,j\leq r}$ is invertible, then we have 
\begin{align*}
	W(F_1',\ldots,F_r';X,Y)= & \det\left(\left(\frac{\partial^{i-1}F_j}{\partial X^{i-1}}\right)_{1\leq i,j\leq r}\cdot\left(a_{i,j}\right)_{1\leq i,j\leq r}\right)\\
	= & W(F_1,\ldots,F_r;X,Y)\cdot\det\left((a_{i,j})_{1\leq i,j\leq r}\right)
\end{align*}
and similarly for the $\hat{W}$-polynomial.
In particular, note that if the forms $F_1,\ldots,F_r$ are linearly dependent, then the $W$-polynomial and the $\hat{W}$-polynomial are both identically zero, i.e. 
$$W(F_1,\ldots,F_r;X,Y)\equiv 0, \ \ \text{and} \ \ \hat{W}(F_1,\ldots,F_r;X,Y)\equiv 0.$$
It turns out that the converse is also true, but this is not so obvious; see \Cref{cor:nonzero}.

Our first task is to verify that the $W$-polynomial is indeed a polynomial and not just a rational function.
\begin{lemma}
	\label{lem:WronPoly}
	For any forms $F_1,\ldots,F_r\in\C[X,Y]_s$, the $W$-polynomial is a polynomial of degree $N=r(s+1-r)$, i.e. $$W(F_1,\ldots,F_r;X,Y)\in\C[X,Y]_N.$$
\end{lemma}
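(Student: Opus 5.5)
The plan is to show that $Y^{r(r-1)/2}$ divides the determinant $D=\det\bigl(\partial^{i-1}F_j/\partial X^{i-1}\bigr)$; the degree count then follows. Each entry in row $i$ is homogeneous of degree $s-i+1$, so every term of the Leibniz expansion has degree $\sum_{i=1}^r (s-i+1)=rs-r(r-1)/2$. Hence $D$ is homogeneous of that degree, or zero. Dividing by $Y^{r(r-1)/2}$ then leaves degree $rs-r(r-1)=r(s+1-r)=N$. So everything reduces to the divisibility claim.

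\textbf{Reduction by multilinearity.} Since $D$ is multilinear in the columns, it suffices to treat the case where each $F_j$ is a monomial $X^{a_j}Y^{s-a_j}$. For monomials, $\partial^{i-1}_X X^{a}Y^{s-a}=(a)_{i-1}X^{a-i+1}Y^{s-a}$, where $(a)_k=a(a-1)\cdots(a-k+1)$ is the falling factorial. (When $a<i-1$ the entry is zero, consistent with $(a)_{i-1}=0$.)

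\textbf{Factoring the monomial case.} We factor $Y^{s-a_j}$ out of column $j$. In the resulting matrix $(a_j)_{i-1}X^{a_j-i+1}$, the exponents of $X$ are negative in places, so instead we factor $X^{a_j}$ from column $j$ and $X^{-(i-1)}$ from row $i$, working formally in $\C[X^{\pm1},Y]$. This gives
$$D=X^{\sum a_j-r(r-1)/2}\,Y^{\sum (s-a_j)}\det\bigl((a_j)_{i-1}\bigr).$$
The last factor is a Vandermonde determinant, $\prod_{j<k}(a_k-a_j)$. It is nonzero only when the $a_j$ are distinct, and in that case the powers are honest polynomial powers, because the original entries were polynomials. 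With distinct $a_j\in\{0,\dots,s\}$ we get $\sum_j(s-a_j)\ge 0+1+\dots+(r-1)=r(r-1)/2$, since the numbers $s-a_j$ are distinct nonnegative integers. Therefore $Y^{r(r-1)/2}$ divides each monomial contribution, and by multilinearity it divides $D$ in general.

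\textbf{Main obstacle.} The only delicate point is justifying the Laurent factorization when some $a_j<i-1$. I would handle it by noting that the identity holds in $\C[X^{\pm1},Y]$, and that the left side is a polynomial. Alternatively, one can observe directly that $\sum_j(s-a_j)\ge r(r-1)/2$ is all that is needed for the power of $Y$, and that the power of $Y$ is unaffected by the $X$-manipulations. As a byproduct, this computation gives the $P_I$ of \Cref{eq:PluckW}, namely $P_I=\prod_{j<k}(a_k-a_j)\,X^{\sum a_j-\binom r2}Y^{\sum(s-a_j)-\binom r2}$.
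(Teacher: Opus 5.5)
Your proof is correct, but it takes a different route from the paper. The paper does not expand in monomials. It picks a basis of $V=\operatorname{span}\{F_j\}$ adapted to $Y$ as in \Cref{lem:IY}, namely $G_j=Y^{n_j}H_j$ with $s\ge n_1>\cdots>n_r\ge 0$. It then pulls $Y^{n_j}$ out of column $j$, which is allowed because multiplication by $Y$ commutes with $\partial/\partial X$, and concludes from $\sum_j n_j\ge\binom{r}{2}$.

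Both proofs rest on the same inequality for distinct nonnegative integers. You apply it to the $Y$-exponents of monomials; the paper applies it to the $Y$-adic orders of a standard basis.

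What your version buys:
\begin{itemize}
\item It is more self-contained. It needs neither \Cref{lem:IY}, whose proof the paper leaves to the reader, nor the implicit reduction to linearly independent $F_j$.
\item It yields the Pl\"ucker expansion of $W$ directly, with coefficient $\prod_{j<k}(i_k-i_j)$ for $i_1<\cdots<i_r$ in increasing order, which fixes the sign in your $P_I$. This agrees with the coefficient $\prod_{i=1}^r(i-1)!\cdot N(I)$ in \Cref{prop:WhatWPluck}. Indeed $N(I)=\det\bigl(\binom{i_k}{i_k-j+1}\bigr)$ and $\binom{i_k}{i_k-j+1}=(i_k)_{j-1}/(j-1)!$. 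So you bypass the lattice-path argument for the $W$ half of that proposition.
\end{itemize}

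What the paper's route buys is sharper information. The same column factoring gives $W(V)=Y^{|\lambda(V,Y)|}\cdot U$ with the exact $Y$-adic order. That is precisely what is reused in \Cref{prop:WZero} and the factorization formula. Your termwise bound only shows that each Pl\"ucker term is divisible by $Y^{\binom{r}{2}}$. It does not by itself identify the exact power of $Y$ dividing $W$.
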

\begin{proof}
	Let $V=\operatorname{span}_\C\{F_1,\ldots,F_r\}\subset\C[X,Y]_s$.  Then one can show that there exists a unique, strictly decreasing sequence of positive integers, depending only on $V$, say $s\geq n_1(V)>\cdots>n_r(V)\geq 0$, and a basis of $V$, say $\{G_1,\ldots,G_r\}$, such that 
	$G_i=Y^{n_i(V)}\cdot H_i$, for some $H_i\in\C[X,Y]_{s-n_i(V)}$ satisfying $H_i(X,0)\neq 0$; see \Cref{lem:IY}.  Given such a basis, note that since multiplication by $Y$ and partial differentiation with respect to $X$ commute, we can write
	\begin{align*}
		W(G_1,\ldots,G_r;X,Y)= & \frac{1}{Y^{\binom{r}{2}}}\cdot \det\left(\left(\frac{\partial^{i-1}G_j}{\partial X^{i-1}}\right)_{1\leq i,j\leq r}\right)\\
		= & \frac{1}{Y^{\binom{r}{2}}}\cdot \det\left(\left(\frac{\partial^{i-1}H_j}{\partial X^{i-1}}\right)_{1\leq i,j\leq r}\cdot\left(\begin{array}{ccc} Y^{n_1(V)} & \cdots & 0\\ 
			\vdots	& \ddots & \vdots\\
			0 & \cdots & Y^{n_r(V)}\\ \end{array}\right)\right)\\
		=\frac{Y^{\sum_{i}n_i}}{Y^{\binom{r}{2}}}\cdot \det\left(\left(\frac{\partial^{i-1}H_j}{\partial X^{i-1}}\right)_{1\leq i,j\leq r}\right).
	\end{align*}
	Since $\sum_{i}n_i\geq \binom{r}{2}=\sum_{i}(i-1)$, it follows that $W(G_1,\ldots,G_r;X,Y)$, and hence $W(F_1,\ldots,F_r;X,Y)$ is a polynomial.
\end{proof}


\begin{remark}
	\label{rem:TonyDef}
	The definition of Wronskian given in \cite[Definition 2.5]{IY} agrees with our definition of $W$-polynomial given above after specializing their variables $dx\mapsto 1$, $dy\mapsto 0$, and scaling by the sign $(-1)^{r(r+1)/2}$.
\end{remark}

\section{Pl\"ucker Expansion Formulas}
\label{sec:Plucker}
\subsection{Weighted Path Matrices}
A directed graph $\Gamma=(\mathcal{V},\mathcal{E})$ is vertex set $\mathcal{V}$ together with a collection $\mathcal{E}$ of ordered pairs of vertices called directed edges; if $e=(a,b)\in \mathcal{E}$ we say that $e$ is directed from $a$ to $b$, where $a$ is the initial vertex and $b$ is the terminal vertex.  A directed path is a sequence $(a_1,a_2,\ldots,a_n)$ where $(a_i,a_{i+1})\in \mathcal{E}$, and a directed cycle is a directed path in which $a_1=a_n$.  We say $\Gamma$ is acyclic if it has no directed cycles.  Given a ring $\mathcal{R}$, an $\mathcal{R}$-edge weighting on $\Gamma$ is a function $\omega\colon \mathcal{E}\rightarrow \mathcal{R}$.  Given an $\mathcal{R}$-weighted directed acyclic graph $(\Gamma,\omega)$, and given vertex subsets of possibly different (finite) cardinalities $\mathcal{A}=\{A_1,\ldots,A_m\},\mathcal{B}=\{B_1,\ldots,B_n\}\subset\mathcal{V}$
define its weighted path matrix to be the $m\times n$ matrix
$$\mathcal{W}(\mathcal{A},\mathcal{B})=\left(\sum_{P\colon A_i\rightarrow B_j}\omega(P)\right)_{\substack{1\leq i\leq m\\ 1\leq j\leq n\\}}$$
where the sum is taken over all directed paths $P$ from $A_i$ to $B_j$, and the weight of the path is the product of its directed edge weights, i.e. $\omega(P)=\prod_{e\in P}\omega(e)$.  Define a path system $\mathcal{P}\colon \mathcal{A}\rightarrow\mathcal{B}$ to be a collection of directed paths $P_i\colon A_i\rightarrow B_{\sigma(i)}$ for injective function $\sigma\colon[m]\rightarrow[n]$, and define its weight to be the product of weights of each of its paths, i.e. $\omega(\mathcal{P})=\prod_{i=1}^n\omega(P_i)$.  We say that the path system is vertex disjoint if no two paths in the system share a common vertex.  In case $m=n$, the path system defines a permutation, and we can define the sign of a path system to be the sign of that permutation, i.e. $\operatorname{sgn}(\mathcal{P})=\operatorname{sgn}(\sigma)$.  The following fundmental result is due to Lindstr\"om \cite{L} and Gessel-Viennot \cite{GV}.
\begin{fact}
	\label{fact:LGV}
	Given an $\mathcal{R}$-weighted directed acyclic graph $(\Gamma,\omega)$, and vertex sets $\mathcal{A}$, and $\mathcal{B}$ as above with $m=n$ (i.e. same cardinality), we have 
	$$\det\left(\mathcal{W}(\mathcal{A},\mathcal{B})\right)=\sum_{\substack{\mathcal{P}\colon\mathcal{A}\rightarrow\mathcal{B}\\
			\text{vertex disjoint}\\}}\operatorname{sgn}(\mathcal{P})\cdot\omega(\mathcal{P})$$
	where the sum (in $\mathcal{R}$) is over all vertex-disjoint path systems from $\mathcal{A}$ to $\mathcal{B}$.	
\end{fact}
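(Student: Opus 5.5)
The plan is the standard Lindstr\"om--Gessel--Viennot argument: expand the determinant as a signed sum over \emph{all} path systems, then cancel the non-vertex-disjoint ones with a sign-reversing, weight-preserving involution. We assume $\mathcal{R}$ is commutative, as needed for the determinant. We also assume, as is implicit, that the path sums are finite; this is automatic if $\Gamma$ is finite and acyclic.

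First, by the Leibniz expansion,
$$\det\left(\mathcal{W}(\mathcal{A},\mathcal{B})\right)=\sum_{\sigma}\operatorname{sgn}(\sigma)\prod_{i=1}^n\sum_{P_i\colon A_i\rightarrow B_{\sigma(i)}}\omega(P_i).$$
Distributing the products over the sums rewrites this as $\sum_{\mathcal{P}}\operatorname{sgn}(\mathcal{P})\,\omega(\mathcal{P})$, where $\mathcal{P}$ ranges over all path systems $\mathcal{A}\rightarrow\mathcal{B}$ whose underlying map is a bijection. It therefore suffices to show that the terms coming from the non-vertex-disjoint systems sum to zero.

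Let $\mathcal{P}=(P_1,\ldots,P_n)$ be a non-vertex-disjoint system. Acyclicity guarantees that each directed path visits each vertex at most once, so ``the first vertex along $P_i$ with a given property'' is well defined. Choose the data as follows:
\begin{enumerate}
\item let $i$ be the smallest index such that $P_i$ shares a vertex with some other path;
\item let $v$ be the first vertex along $P_i$ that lies on some other path;
\item let $j\neq i$ be the smallest index with $v\in P_j$.
\end{enumerate}
Define $\mathcal{P}'$ by swapping tails at $v$: the new $P_i'$ is $P_i$ up to $v$ followed by $P_j$ after $v$, and the new $P_j'$ is $P_j$ up to $v$ followed by $P_i$ after $v$. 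All other paths are left unchanged. The new $P_i'$ and $P_j'$ are genuine directed paths, ending at $B_{\sigma(j)}$ and $B_{\sigma(i)}$ respectively.

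The multiset of edges used by the system is unchanged, so $\omega(\mathcal{P}')=\omega(\mathcal{P})$ by commutativity. The underlying permutation is composed with a transposition, so $\operatorname{sgn}(\mathcal{P}')=-\operatorname{sgn}(\mathcal{P})$. The system $\mathcal{P}'$ is again non-vertex-disjoint, since $v$ is still shared.

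The main point to check is that $\mathcal{P}\mapsto\mathcal{P}'$ is an involution, i.e.\ that the same data $(i,v,j)$ are selected for $\mathcal{P}'$; then applying the swap again recovers $\mathcal{P}$.
\begin{itemize}
\item[(a)] \emph{The index $i$ is unchanged.} The vertex set of $P_i'\cup P_j'$ equals that of $P_i\cup P_j$, and all other paths are untouched. So for any $k\neq i,j$, path $k$ meets another path in $\mathcal{P}'$ iff it did in $\mathcal{P}$. Paths $i$ and $j$ still meet each other at $v$, so the smallest intersecting index is still $i$.
\item[(b)] \emph{The vertex $v$ is unchanged.} The initial segment of $P_i'$ up to $v$ coincides with that of $P_i$. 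No vertex of that segment before $v$ lay on any other path of $\mathcal{P}$. Such a vertex also cannot lie on the new tail of $P_j'$ (the old tail of $P_i$), since $P_i$ is a path without repeated vertices. Hence $v$ is again the first shared vertex on $P_i'$.
\item[(c)] \emph{The index $j$ is unchanged.} The set of paths through $v$ is unchanged, so $j$ is again the smallest such index other than $i$.
\end{itemize}
Thus the non-vertex-disjoint systems split into pairs $\{\mathcal{P},\mathcal{P}'\}$ whose contributions cancel. This leaves exactly the vertex-disjoint sum claimed in \Cref{fact:LGV}.

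I expect the involution verification, especially step (b), to be the only delicate step. It is precisely where acyclicity is used: paths are simple, so tails cannot reintroduce earlier vertices.
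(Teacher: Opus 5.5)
Your proof is correct and complete. It is the standard tail-swapping involution, and your canonical choice of $(i,v,j)$ together with checks (a)--(c) shows it is a fixed-point-free, sign-reversing, weight-preserving involution on the non-vertex-disjoint systems; note that $j>i$ automatically, by minimality of $i$.

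The paper gives no proof of \Cref{fact:LGV}; it simply cites Lindstr\"om and Gessel--Viennot, and your argument is essentially the one in Gessel--Viennot. One remark is worth adding: in the paper's application $\Gamma=\Z^2$ is infinite, but your finiteness hypothesis still holds, since there are only finitely many NE lattice paths between two fixed lattice points.
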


\subsection{Matrix Factorizations and Pl\"ucker Expansion Formulas}
Take $\Gamma=(\mathcal{V},\mathcal{E})$ to be the plane lattice whose directed paths are the NE lattice paths, i.e. vertices are lattice points $\mathcal{V}=\Z^2=\{(a,b) \ | \ a,b\in\Z\}$ and directed edges are unit steps up (N) or right (E), i.e. $\mathcal{E}=\{(a,b)\overset{N}{\shortrightarrow}(a,b+1) \ \text{or} \ (a,b)\overset{E}{\shortrightarrow} (a+1,b)\}$.

Define the following finite set of lattice points in the plane, with $r$ and $s$ fixed positive integers, 
\begin{align*}
	\mathcal{A}= & \left\{A_i=(i-1,-i+1) \ | \ 1\leq i\leq r\right\}\\
	\mathcal{B}= & \left\{B_j=(j,0) \ | \ 0\leq j\leq s \right\}\\
	\mathcal{C}= & \left\{C_j=(j,-j+s-r+1) \ | \ 0\leq j\leq s\right\}
\end{align*}
and consder the $r\times (s+1)$-matrices
\begin{align*}
	\mathcal{W}(\mathcal{A},\mathcal{B})= & \left(\binom{j}{j-i+1}\left(\frac{X}{Y}\right)^{j-i+1}Y^{i-1}\right)_{\substack{1\leq i\leq r\\ 0\leq j\leq s\\}}\\
	\hat{\mathcal{W}}(\mathcal{A},\mathcal{C})= & 
	\left(\binom{s-r+1}{j-i+1}X^{j-i+1}Y^{s-r-j+i}\right)_{\substack{1\leq i\leq r\\ 0\leq j\leq s\\}}.
\end{align*}
Taking $\mathcal{R}=\C(X,Y)$ the field of rational functions in $X$ and $Y$ with complex coefficients, the matrix $\mathcal{W}(\mathcal{A},\mathcal{B})$ is the weighted path matrix for the $\mathcal{R}$-weighted directed acyclic graph of NE lattice paths from vertices $\mathcal{A}$ to $\mathcal{B}$ whose East and North edges are weighted by rational functions $\omega(E(a,b))=X/Y$ and $\omega(N(a,b))=Y$, respectively.

Likewise, the matrix $\hat{\mathcal{W}}(\mathcal{A},\mathcal{C})$ is the weighted path matrix for NE lattice paths from vertices $\mathcal{A}$ to $\mathcal{C}$ whose East and North edges are weighted by the variables $\hat{\omega}(E(a,b))=X$ and $\hat{\omega}(N(a,b))=Y$, respectively.

Let $\delta_{ij}$ be the Kronecker delta function, and define the $r\times r$ diagonal matrix
$$D=\left(\delta_{ij}\cdot (i-1)!\cdot Y^{s-3(i-1)}\right)_{1\leq i,j\leq r}$$
and similarly define the $(s+1)\times (s+1)$ diagonal matrix 
$$\hat{D}=\left(\delta_{ij}\cdot \frac{i!\cdot (s-i)!}{(s-r+1)!}\right)_{0\leq i,j\leq s}.$$

Let $V\subset\C[X,Y]_s$ be an $r$-dimensional subspace, and let $\{F_1,\ldots,F_r\}\subset V$ be a basis, with $$F_i=\sum_{j=0}^sm_{ij}X^jY^{s-j}, \ 1\leq i\leq r$$
and define the $r\times (s+1)$ matrix 
$$M_V=\left(m_{ij}\right)_{\substack{1\leq i\leq r\\ 0\leq j\leq s\\}}.$$
Then it is straightforward to check that we have the following factorizations of the underlying rational function matrices for the $W$- and $\hat{W}$-polynomials:
\begin{align}
	\label{eq:WFact}
	\left(\frac{1}{Y^{i-1}}\cdot\frac{\partial^{i-1}F_j}{\partial X^{i-1}}\right)_{1\leq i,j\leq r}= & D\cdot \mathcal{W}(\mathcal{A},\mathcal{B})\cdot M_V^T\\
	\label{eq:hatWFact}
	\left(\frac{\partial^{r-1}F_j}{\partial X^{i-1}\partial Y^{r-i}}\right)_{1\leq i,j\leq r}= & \hat{\mathcal{W}}(\mathcal{A},\mathcal{C})\cdot \hat{D}\cdot M_V^T
\end{align}
where $M^T$ is the transpose of $M$.  For an $r\times (s+1)$ matrix $M$, and for a subset $I\subset\{0,\ldots,s\}=[s]_0$, $|I|=r$, denote by $\Delta_I(M)$ the determinant of the $r\times r$ submatrix of $M$ whose columns are indexed by $I$.  According the the Lindstr\"om-Gessel-Viennot \Cref{fact:LGV}, if $\mathcal{W}(\mathcal{A},\mathcal{B})$ is a weighted path matrix from vertex sets $\mathcal{A}$ to $\mathcal{B}$, then we have  
$$\Delta_I(\mathcal{W}(\mathcal{A},\mathcal{B}))=\sum_{\substack{\mathcal{P}\colon \mathcal{A}\rightarrow\mathcal{B}(I)\\ \text{vertex disjoint}\\}}\sgn(\mathcal{P})\cdot\omega(\mathcal{P})$$
where the sum is over vertex disjoint path systems from vertex set $\mathcal{A}=\{A_1,\ldots,A_r\}$ and vertex subset $\mathcal{B}(I)=\{B_{i_1},\ldots,B_{i_r}\}\subset\mathcal{B}$ indexed by $I$.  Note that for NE lattice paths from $\mathcal{A}$ to $\mathcal{B}$ or $\mathcal{C}$ as above, \emph{vertex disjoint} path systems always define the identity permutation.  Moreover in that case, the weight of a path system satisfies 
\begin{align*}
	\omega(\mathcal{P}\colon \mathcal{A}\rightarrow\mathcal{B}(I))= & 
	\left(\frac{X}{Y}\right)^{|\lambda(I)|}\cdot Y^{r(r-1)/2}\\
	\hat{\omega}(\mathcal{P}\colon \mathcal{A}\rightarrow\mathcal{C}(I))= & 
	X^{|\lambda(I)|}\cdot Y^{r(s-r+1)-|\lambda(I)|}
\end{align*}
where $\lambda(I)$ is the partition associated to $I$, i.e. if $I=\{0\leq i_1<\cdots < i_r\leq s\}$, then $\lambda(I)=(\lambda_1,\ldots,\lambda_r)$ where $\lambda_j=i_{r-j}-(r-j-1)$.  It follows that we have 
\begin{align}
	\label{eq:WAB}
	\Delta_I(\mathcal{W}(\mathcal{A},\mathcal{B}))= & \det\left(\mathcal{W}(\mathcal{A},\mathcal{B}(I))\right)=N(I)\cdot\left(\frac{X}{Y}\right)^{|\lambda(I)|}\cdot Y^{r(r-1)/2}\\
	\label{eq:WAC}
	\Delta_I(\hat{\mathcal{W}}(\mathcal{A},\mathcal{C}))= & \det\left(\hat{\mathcal{W}}(\mathcal{A},\mathcal{C}(I))\right)=\hat{N}(I)\cdot X^{|\lambda(I)|}\cdot Y^{r(s-r+1)-|\lambda(I)|}
\end{align} 
where $N(I)$, respectively $\hat{N}(I)$, is the number of vertex disjoint NE lattice path systems from $\mathcal{A}$ to $\mathcal{B}(I)$, respectively $\mathcal{C}(I)$.  

\begin{remark}
	\label{rem:combinatorial}
	The numbers $N(I)$ and $\hat{N}(I)$ have alternative combinatorial interpretations in terms of the partition $\lambda(I)$:  $N(I)$ counts the number of semi-standard Young tableaux of shape $\lambda(I)$ using the numbers $[r]$ and $\hat{N}(I)$ counts the number of semi-standard Young tableaux of conjugate shape $\lambda'(I)$ using $[s-r+1]$.  See \cite[Section 7.16]{Stanley} for a proof of a similar statement. 
\end{remark}
Given $V\in\operatorname{Gr}_r\left(\C[X,Y]_s\right)$, a basis $\{F_1,\ldots,F_r\}$ with coefficient matrix $M_V$, we write $W(M_V)$ to mean $W(F_1,\ldots,F_r;X,Y)$ and similarly for $\hat{W}$.  
\begin{proposition}[Pl\"ucker Expansion Formulas]
	\label{prop:WhatWPluck}
	With $V$ and $M_V$ defined as above, we have
	\begin{align*}
		W(M_V)= & \sum_{I\in\binom{[s]_0}{r}}\left(\prod_{i=1}^r(i-1)!\cdot N(I)\cdot X^{|\lambda(I)|}\cdot Y^{r(s-r+1)-|\lambda(I)|}\right)\cdot \Delta_I(M_V)\\
		\hat{W}(M_V)= & \sum_{I\in\binom{[s]_0}{r}}\left(\prod_{i\in I}\frac{i!\cdot (s-i)!}{(s-r+1)!}\cdot \hat{N}(I)\cdot X^{|\lambda(I)|}\cdot Y^{r(s-r+1)-|\lambda(I)|}\right)\cdot\Delta_I(M_V)
	\end{align*}
\end{proposition}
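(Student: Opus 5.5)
The plan is to take determinants of both sides of the matrix factorizations \Cref{eq:WFact} and \Cref{eq:hatWFact}, and expand the resulting products of rectangular matrices with the Cauchy--Binet formula. Cauchy--Binet writes the determinant of a product of an $r\times(s+1)$ matrix and an $(s+1)\times r$ matrix as a sum over $r$-subsets $I\subset[s]_0$ of products of maximal minors. The minors of the weighted path matrices are then supplied by \Cref{eq:WAB} and \Cref{eq:WAC}, and the minors of $M_V^T$ are $\Delta_I(M_V)$, since transposition does not change minors.

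For the $W$-polynomial, the left side of \Cref{eq:WFact} is the matrix $\partial^{i-1}F_j/\partial X^{i-1}$ with its $i$-th row scaled by $Y^{-(i-1)}$. Its determinant is therefore $Y^{-r(r-1)/2}\det(\partial^{i-1}F_j/\partial X^{i-1}) = W(M_V)$. On the right side, $\det D=\prod_{i=1}^r (i-1)!\,Y^{s-3(i-1)}$, and Cauchy--Binet gives
$$\det(\mathcal{W}(\mathcal{A},\mathcal{B})M_V^T)=\sum_I N(I)\,(X/Y)^{|\lambda(I)|}\,Y^{r(r-1)/2}\,\Delta_I(M_V).$$
The power of $Y$ in each term is then
$$rs-3\binom{r}{2}+\binom{r}{2}-|\lambda(I)| = rs-r(r-1)-|\lambda(I)| = r(s-r+1)-|\lambda(I)|,$$
which matches the stated formula.

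For $\hat W$, the left side of \Cref{eq:hatWFact} is exactly the matrix in \Cref{eq:Wrong}. On the right, I first combine $\hat D\cdot M_V^T$ and then apply Cauchy--Binet to $\hat{\mathcal{W}}(\mathcal{A},\mathcal{C})\cdot(\hat D M_V^T)$. Because $\hat D$ is diagonal, the $I$-minor of $\hat D M_V^T$ is $\prod_{i\in I}\frac{i!(s-i)!}{(s-r+1)!}\cdot\Delta_I(M_V)$. Multiplying this by \Cref{eq:WAC} gives the claimed coefficient.

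Since the factorizations and the LGV evaluations are taken as given, the main obstacle is only bookkeeping. Two points need care. First, the row scaling by $Y^{-(i-1)}$ must produce exactly the prefactor $Y^{-r(r-1)/2}$ of \Cref{def:Wron}. Second, the exponent of $Y$ coming from $D$ must combine correctly with $Y^{r(r-1)/2}$ and the $Y^{-|\lambda(I)|}$ from the path weights. I would double-check the exponent $s-3(i-1)$ in $D$ against \Cref{eq:WFact} on a single entry. For instance, the $(1,j)$ entry of $D\,\mathcal{W}\,M_V^T$ should be $Y^{s}\sum_k m_{jk}(X/Y)^k = F_j$, which is consistent.
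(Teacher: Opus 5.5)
Your proposal is correct and follows the paper's own argument: take determinants in \Cref{eq:WFact} and \Cref{eq:hatWFact}, expand by Cauchy--Binet, and substitute the LGV evaluations \Cref{eq:WAB} and \Cref{eq:WAC}. The paper states this in one line; your extra bookkeeping (the row scaling giving $Y^{-r(r-1)/2}$, the exponent count $rs-3\binom{r}{2}+\binom{r}{2}-|\lambda(I)|=r(s-r+1)-|\lambda(I)|$, and the row-$I$ minor of $\hat D M_V^T$) fills in what the paper leaves implicit, and it all checks out.
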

\begin{proof}
	The proof is an immediate consequence of \Cref{eq:WFact}, \Cref{eq:hatWFact}, \Cref{eq:WAB}, \Cref{eq:WAC}, and the Cauchy-Binet formula.  
\end{proof}

The next result can be derived combinatorially from \Cref{prop:WhatWPluck} as well as \Cref{eq:WAB,eq:WAC}.
\begin{proposition}
	\label{cor:c}
	Fix positive integers $r\leq s+1$, and set 
	$$c(r,s)=\prod_{i=0}^{r-1}\frac{(s-r+1)!}{(s-i)!}.$$
	Then for any $r$ homogeneous $s$-forms $F_1,\ldots,F_r\in\C[X,Y]_s$ we have the polynomial identity 
	$$W(F_1,\ldots,F_r;X,Y)\equiv c(r,s)\cdot \hat{W}(F_1,\ldots,F_r;X,Y).$$
\end{proposition}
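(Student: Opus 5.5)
By \Cref{prop:WhatWPluck}, both $W(M_V)$ and $\hat{W}(M_V)$ are linear combinations of the same Pl\"ucker coordinates $\Delta_I(M_V)$, and the monomial factor $X^{|\lambda(I)|}Y^{r(s-r+1)-|\lambda(I)|}$ is the same in both. So it suffices to prove, for every $I=\{i_1<\cdots<i_r\}\subset[s]_0$, the scalar identity
\begin{equation*}
\prod_{k=0}^{r-1}k!\cdot N(I)=c(r,s)\cdot\prod_{i\in I}\frac{i!\,(s-i)!}{(s-r+1)!}\cdot\hat{N}(I).
\end{equation*}
Since $c(r,s)\prod_{i\in I}\frac{1}{(s-r+1)!}=\prod_{k=0}^{r-1}\frac{1}{(s-k)!}$, this is equivalent to
\begin{equation*}
\prod_{k=0}^{r-1}k!\,(s-k)!\cdot N(I)=\prod_{i\in I}i!\,(s-i)!\cdot\hat{N}(I). \qquad (\ast)
\end{equation*}

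\textbf{Computing $N(I)$.} By \Cref{eq:WAB}, setting $X=Y=1$ gives $N(I)=\det\bigl(\binom{i_j}{k-1}\bigr)_{1\leq k,j\leq r}$. The $k$-th row is the polynomial $\binom{x}{k-1}$ evaluated at $x=i_j$. This polynomial has degree $k-1$ and leading coefficient $1/(k-1)!$. Row reduction to the monomial basis therefore gives
\begin{equation*}
N(I)=\frac{\prod_{a<b}(i_b-i_a)}{\prod_{k=0}^{r-1}k!}.
\end{equation*}

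\textbf{Computing $\hat{N}(I)$.} Similarly, \Cref{eq:WAC} at $X=Y=1$ gives $\hat{N}(I)=\det\bigl(\binom{m}{i_j-k+1}\bigr)$ with $m=s-r+1$. Multiply column $j$ by $i_j!(s-i_j)!/m!$. The $(k,j)$ entry becomes
\begin{equation*}
\frac{i_j!}{(i_j-k+1)!}\cdot\frac{(s-i_j)!}{(s-i_j-r+k)!}=p_k(i_j),\qquad p_k(x)=(x)_{k-1}\,(s-x)_{r-k},
\end{equation*}
where $(x)_n$ is the falling factorial. One has to check that the boundary cases, where the binomial coefficient vanishes, match the vanishing of the falling factorials; this is where care is needed. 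Each $p_k$ is a polynomial of degree $r-1$ whose coefficients depend only on $r$ and $s$. Hence
\begin{equation*}
\prod_{i\in I}\frac{i!\,(s-i)!}{m!}\cdot\hat{N}(I)=\det\bigl(p_k(i_j)\bigr)=C(r,s)\cdot\prod_{a<b}(i_b-i_a),
\end{equation*}
where $C(r,s)$ is the determinant of the coefficient matrix of $p_1,\ldots,p_r$, independent of $I$.

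\textbf{Fixing the constant.} Both sides of $(\ast)$ are therefore constants (depending on $r,s$) times the Vandermonde product $\prod_{a<b}(i_b-i_a)$. It suffices to verify $(\ast)$ for a single $I$ with nonzero Vandermonde. Take $I_0=\{0,1,\ldots,r-1\}$. Then $\lambda(I_0)=\emptyset$, and there is a unique vertex-disjoint path system in each case (all paths are straight), so $N(I_0)=\hat{N}(I_0)=1$. Then $(\ast)$ reads $\prod_{k=0}^{r-1}k!\,(s-k)!=\prod_{i=0}^{r-1}i!\,(s-i)!$, which holds trivially.

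The main obstacle is the middle step: justifying that after column scaling every entry is exactly $p_k(i_j)$, including the degenerate index ranges. One must also make sure $C(r,s)$ does not vanish or otherwise depend on $I$. Nonvanishing is in fact not needed, since the comparison at $I_0$ only uses that both sides are $I$-independent multiples of the Vandermonde product.
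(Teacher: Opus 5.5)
Your proof is correct, and the boundary step you flagged does go through. Since $0\le i_j\le s$, the factor $(i_j)_{k-1}$ vanishes exactly when $i_j-k+1<0$, and $(s-i_j)_{r-k}$ vanishes exactly when $i_j-k+1>s-r+1$. These are precisely the ranges where $\binom{s-r+1}{i_j-k+1}=0$; in the remaining range the identity is a direct factorial computation.

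Your reduction to the per-$I$ coefficient identity matches the paper's, but you prove the key step differently.

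\textbf{The paper's route.} It absorbs the diagonal factors into the determinants and rewrites the identity as
$\det\bigl(\binom{s-j+1}{i_k-j+1}\bigr)=\det\bigl(\binom{s-r+1}{i_k-j+1}\bigr)$.
It then proves this bijectively with Lindstr\"om--Gessel--Viennot. In a vertex-disjoint system starting from the horizontal row $(j-1,-r+1)$, each path is forced to go straight up to $(j-1,-j+1)$. Truncating those forced segments identifies the two families of path systems.

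\textbf{Your route.} After column scaling in the second case, the rows of both matrices are evaluations at $i_1,\ldots,i_r$ of fixed polynomials of degree at most $r-1$ that depend only on $r$ and $s$. Hence both sides are $I$-independent multiples of the Vandermonde product, and you fix the constant at $I_0=\{0,\ldots,r-1\}$.

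\textbf{What each buys.} The paper's argument stays inside the lattice-path picture and exhibits the identity as a bijection, with no closed formulas. Yours is purely algebraic and needs no path bijection. As a by-product it gives explicit product formulas, e.g.\ $N(I)=\prod_{a<b}(i_b-i_a)\big/\prod_{k=0}^{r-1}k!$, which is the classical product formula for the tableau counts in \Cref{rem:combinatorial}, and an analogous formula for $\hat N(I)$.
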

\begin{proof}
	It will suffice to show, by \Cref{prop:WhatWPluck}, that for each $r$-subset $I=\{0\leq i_1<\cdots<i_r\leq s\}$, we have 
	\begin{equation}
		\label{eq:NhatNI}
		\prod_{i=1}^r(i-1)!\cdot N(I)=c(r,s)\cdot \prod_{i\in I}\frac{i!\cdot (s-i)!}{(s-r+1)!}\cdot \hat{N}(I).
	\end{equation}
	According to \Cref{eq:WAB,eq:WAC}, $N(I)$ and $\hat{N}(I)$ are binomial determinants: 
	\begin{align*}
		N(I)= & \det\left(\mathcal{W}(\mathcal{A},\mathcal{B}(I))|_{(X,Y)=(1,1)}\right)=\det\left(\left(\binom{i_k}{i_k-j+1}\right)_{1\leq j,k\leq r}\right)\\
		\hat{N}(I)= & \det\left(\mathcal{W}(\mathcal{A},\mathcal{C}(I))|_{(X,Y)=(1,1)}\right)=\det\left(\left(\binom{s-r+1}{i_k-j+1}\right)_{1\leq j,k\leq r}\right).
	\end{align*}
	It follows that if we regard the products in \Cref{eq:NhatNI} as determinants of diagonal matrices, then \Cref{eq:NhatNI} is equivalent to the equality
	\begin{equation}
		\label{eq:dets}
		\det\left(\left(\binom{s-j+1}{i_k-j+1}\right)_{1\leq j,k\leq r}\right)=\det\left(\left(\binom{s-r+1}{i_k-j+1}\right)_{1\leq j,k\leq r}\right).
	\end{equation}
	Interpreting these matrices as weighted path matrices, note that the LHS of \Cref{eq:dets} counts the number of vertex disjoint path systems from $\mathcal{A}'=\left\{A'_j=(j-1,-r+1) \ | \ 1\leq j\leq r\right\}$ to $\mathcal{B}(I)=\left\{B_{i_k}=(i_k,-i_k+s-r+1) \ | \ 1\leq k\leq r\right\}$, whereas the RHS of \Cref{eq:dets} counts the number of vertex disjoint path systems from $\mathcal{A}=\{(j-1,j-1) \ | \ 1\leq j\leq r\}$ to $\mathcal{B}(I)$.  By truncation, the vertex disjoint path systems from $\mathcal{A}'$ to $\mathcal{B}(I)$ are in bijective correspondence with vertex disjoint path systems from $\mathcal{A}$ to $\mathcal{B}(I)$, and equality \Cref{eq:dets}, and hence \Cref{eq:NhatNI}, holds.
\end{proof}

\begin{remark}
	\label{rem:Pasch}
	\Cref{cor:c} was proved by M. Pasch in his 1874 paper \cite[page 180]{Pasch} using a computation similar to ours, but without the Pl\"ucker expansion formulas and the combinatorial interpretation.  This result also appears in the work of A. Abdesselam and J. Chipalkatti in the broader context of classical invariant theory \cite[Section 2.9]{AC}.  We thank the anonymous referee for pointing out these two references.
\end{remark}

\section{Factorization Formulas}
\label{sec:Props}
\subsection{$\operatorname{GL}_2(\C)$-actions}
There is a natural action of $\operatorname{GL}_2(\C)$ on $\C[X,Y]$ given by linear change of coordinates according to the usual rule: for $F(X,Y)\in\C[X,Y]$ and $\phi=\left(\begin{array}{cc} a & b\\ c & d\\ \end{array}\right)\in\operatorname{GL}_2(\C)$, 
\begin{equation}
	\label{eq:phiaction}
	\phi\left(F\right)(X,Y)=F\left((X,Y)\cdot \left(\begin{array}{cc}a & b\\ c & d\\ \end{array}\right)\right)=F(aX+cY,bX+dY).
\end{equation}

One major difference between $W$-polynomials and $\hat{W}$-polynomials is that the entries in the defining matrix for $\hat{W}$-polynomials all have the same degree.  This allows us to exploit a certain duality, related to Macaulay duality, in studying the effects of linear changes of coordinates on $\hat{W}$-polynomials.

Define the standard graded polynomial ring $R=\C[x,y]$ and define an action of $R$ on the standard graded polynomial ring $S=\C[X,Y]$ by partial differentiation, denoted by
$$x\circ F=\frac{\partial F}{\partial X}, \ \text{and} \ y\circ F=\frac{\partial F}{\partial Y};$$
in general we shall use lower case letters for partial differential operators in $R$, and upper case letters for polynomials in $S$, and the symbol $f\circ F$ shall mean apply the partial differential operator $f\in R$ to the polynomial $F\in S$.  This defines a pairing of graded vector spaces 
\begin{equation}
	\label{eq:pairing}
\langle-,-\rangle\colon R\times S\rightarrow\C, \ \left\langle f,F\right\rangle=(f\circ F)(0,0)
\end{equation}
which restricts to a non-degenerate pairing $\langle-,-\rangle_i\colon R_i\times S_i\rightarrow\C$ for each fixed degree $i\geq 0$.

Given a graded algebra map $\phi\colon S\rightarrow S$ defined by \Cref{eq:phiaction} define its adjoint map to be the algebra map $\phi^*\colon R\rightarrow R$ by 
\begin{equation}
	\label{eq:phistaraction}
	\phi^*(f)(x,y)=f\left((x,y)\cdot \left(\begin{array}{cc}a & b\\ c & d\\ \end{array}\right)^T\right)=f\left(ax+by,cx+dy\right)
\end{equation}
where $A^T$ is the transpose of $A$.
The following formula gives the relationship between $\phi$ and $\phi^*$ with respect to the bilinear pairing defined by \Cref{eq:pairing}.
\begin{lemma}
	\label{lem:action}
	Given $\phi\colon S\rightarrow S$ and $\phi^*\colon R\rightarrow R$ as in \Cref{eq:phiaction,eq:phistaraction}, and for any homogeneous elements $f\in R$ and $F\in S$, we have 
	$$\phi\left(\phi^*(f)\circ F\right)=f\circ \phi(F).$$
\end{lemma}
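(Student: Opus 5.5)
Both sides of the identity $\phi\left(\phi^*(f)\circ F\right)=f\circ \phi(F)$ are bilinear in $(f,F)$, and both $\phi$ and $\phi^*$ are algebra maps. So I would reduce to generators: first prove the identity for linear $f$, then pass to products of linear forms, and conclude by linearity in $f$.

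\textbf{Step 1: linear forms.} Take $f=x$. Then $\phi^*(x)=ax+by$, so the left side is
$$\phi\left(a\frac{\partial F}{\partial X}+b\frac{\partial F}{\partial Y}\right)=a\,F_X(aX+cY,bX+dY)+b\,F_Y(aX+cY,bX+dY).$$
The right side is $\frac{\partial}{\partial X}\left[F(aX+cY,bX+dY)\right]$. By the chain rule this equals the same expression, since the $X$-derivative of the first argument is $a$ and of the second is $b$.

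Similarly, for $f=y$ we have $\phi^*(y)=cx+dy$. The $Y$-derivative of $F(aX+cY,bX+dY)$ gives $c\,F_X(\cdots)+d\,F_Y(\cdots)$, which matches. By linearity in $f$, the identity then holds for every $f\in R_1$ and every $F$; homogeneity of $F$ is not needed here.

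\textbf{Step 2: induction on degree.} Suppose the identity holds for $g$ of degree $k$ and all $F$. Let $\ell$ be linear. Since the action is a module action and $\phi^*$ is multiplicative,
$$\phi\left(\phi^*(\ell g)\circ F\right)=\phi\left(\phi^*(\ell)\circ(\phi^*(g)\circ F)\right).$$
Applying Step 1 with $G=\phi^*(g)\circ F$ turns this into
$$\ell\circ\phi\left(\phi^*(g)\circ F\right),$$
and the induction hypothesis rewrites this as $\ell\circ(g\circ\phi(F))=(\ell g)\circ\phi(F)$.

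Monomials $x^iy^j$ are products of linear forms and span $R$, so bilinearity gives the result for all $f$. For $f$ a constant the statement is trivial.

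The only step needing care is Step 1: keeping straight that $\phi$ substitutes via right multiplication by the matrix while $\phi^*$ uses its transpose, so that the chain-rule coefficients $(a,b)$ and $(c,d)$ line up correctly. Beyond this check there is no real obstacle.
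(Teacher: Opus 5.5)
Your proof is correct and follows the paper's argument: a chain-rule check for linear $f$, then induction on degree by peeling off a linear factor and using the multiplicativity of $\phi^*$. The only cosmetic difference is that the paper factors an arbitrary homogeneous $f$ as $\ell\cdot f'$, using that binary forms over $\C$ split into linear factors, whereas you run the induction on monomials and extend by linearity in $f$, which avoids that factorization.
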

\begin{proof}
	Use induction on $\deg(f)$.  For the base case, assume $f=ex+gy$ is a linear form.  Then for any $F\in\C[X,Y]$, we have	\begin{align*}
		\phi\left(\phi^*(f)\circ F\right)= & \phi\left((ea+gc)\frac{\partial F}{\partial X}+(eb+gd)\frac{\partial F}{\partial Y}\right)\\
		= & (ea+gc)\frac{\partial F}{\partial X}(aX+cY,bX+dY)+(eb+gd)\frac{\partial F}{\partial Y}(aX+cY,bX+dY), 
	\end{align*}
	whereas 
	\begin{align*}
		f\circ \phi(F)= & (ex+gy)\circ F(aX+cY,bX+dY)\\
		= & e\left(a\cdot \frac{\partial F}{\partial X}(aX+cY,bX+dY)+b\cdot \frac{\partial F}{\partial Y}(aX+cY,bX+dY)\right)\\
		& +g\left(c\cdot \frac{\partial F}{\partial X}(aX+cY,bX+dY)+d\cdot \frac{\partial F}{\partial Y}(aX+cY,bX+dY)\right)
	\end{align*}
	(the second equaltiy follows from the chain rule) and they are the same.
	
	For the inductive step, assume the formula holds for $f'\in\C[x,y]$ with $\deg(f')<\deg(f)$.  We can factor $f$ and write $f=\ell\cdot f'$ for some linear form $\ell$ and some homogeneous polynomial $f'\in\C[x,y]$ satisfying $\deg(f')=\deg(f)-1$.  Then we have 
	\begin{align*}
		\phi\left(\phi^*(f)\circ F\right)= & \phi\left(\left(\phi^*(\ell)\cdot \phi^*(f')\right)\circ F\right)\\
		= & \phi\left(\phi^*(\ell)\circ \left(\phi^*(f')\circ F\right)\right)\\
		= & \ell\circ \phi\left(\phi^*(f')\circ F\right)\\
		= & \ell\circ \left(f'\circ \phi(F)\right)=f\circ \phi(F)
	\end{align*}
	where the second to last equality follows from the base case and the last equality follows from our inductive hypothesis.  This completes the inductive step, and hence the proof.
\end{proof}
A consequence of \Cref{lem:action} is the following formula for the $\operatorname{GL}_2(\C)$-action on the $W$- and $\hat{W}$-polynomials.
\begin{lemma}
	\label{cor:hatWaction}
	Let $\phi\colon \C[X,Y]\rightarrow\C[X,Y]$ be any invertible linear change of coordinates, i.e. $\phi\in\operatorname{GL}_2(\C)$, and let $A=(\phi^*)^{-1}\colon\C[x,y]_{r-1}\rightarrow\C[x,y]_{r-1}$ be the linear transformation that is the restriction to the degree $r-1$ graded component of the inverse of the adjoint map.  Then for any $r$ homogeneous $s$-forms $F_1,\ldots,F_r\in\C[X,Y]_s$, we have
	$$\phi\left(\hat{W}(F_1,\ldots,F_r)\right)=\det(A)\cdot \hat{W}\left(\phi(F_1),\ldots,\phi(F_r)\right).$$
	The same holds for the $W$-polynomial. 
\end{lemma}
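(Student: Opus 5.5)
The plan is to rewrite the $\hat{W}$-matrix in terms of the pairing and the differential-operator action. The rows of the matrix defining $\hat{W}(F_1,\ldots,F_r)$ are indexed by the monomial basis $x^{i-1}y^{r-i}$, $1\leq i\leq r$, of $R_{r-1}=\C[x,y]_{r-1}$. The $(i,j)$ entry is $(x^{i-1}y^{r-i})\circ F_j$.

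\textbf{Applying $\phi$ entrywise.} Since $\phi$ is a ring homomorphism of $S$, it commutes with taking determinants of matrices over $S$. So
$$\phi\left(\hat{W}(F_1,\ldots,F_r)\right)=\det\left(\left(\phi\left(m_i\circ F_j\right)\right)_{i,j}\right),$$
with $m_i=x^{i-1}y^{r-i}$. Now write $m_i=\phi^*(A(m_i))$, where $A=(\phi^*)^{-1}$ restricted to $R_{r-1}$. This works because $\phi^*$ is an invertible graded algebra map, with inverse the adjoint of $\phi^{-1}$, so it preserves $R_{r-1}$. Applying \Cref{lem:action} with $f=A(m_i)$ gives
$$\phi(m_i\circ F_j)=\phi\left(\phi^*(A(m_i))\circ F_j\right)=A(m_i)\circ\phi(F_j).$$

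\textbf{Changing rows.} Expand $A(m_i)=\sum_k a_{ki}m_k$, where $(a_{ki})$ is the matrix of $A$ in the monomial basis. By linearity of the action, the matrix $\left(A(m_i)\circ\phi(F_j)\right)_{i,j}$ equals $(a_{ki})^T\cdot\left(m_k\circ\phi(F_j)\right)_{k,j}$. Taking determinants yields
$$\phi\left(\hat{W}(F_1,\ldots,F_r)\right)=\det(A)\cdot\hat{W}(\phi(F_1),\ldots,\phi(F_r)),$$
which is the claim for $\hat{W}$.

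\textbf{The $W$-polynomial.} I would not repeat the argument directly, because the $W$-matrix has entries of different degrees and the factor $1/Y^{\binom r2}$ is not $\phi$-invariant. Instead I would invoke \Cref{cor:c}: $W=c(r,s)\hat{W}$ identically for every tuple of forms. Applying this to both $(F_j)$ and $(\phi(F_j))$, and using that $\phi$ is $\C$-linear, the $\hat{W}$ identity transfers verbatim to $W$.

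The main obstacle is only bookkeeping: checking that $\phi^*$ preserves degree $r-1$ and is invertible there, so that $A$ makes sense, and keeping the transpose and index conventions straight. None of these affects the determinant. The substantive input is entirely \Cref{lem:action} and \Cref{cor:c}.
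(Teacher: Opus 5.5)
Your proposal is correct and follows the paper's proof essentially verbatim. You rewrite the $\hat{W}$-matrix as $\left(x^{i-1}y^{r-i}\circ F_j\right)$, write each row operator as $\phi^*$ applied to $(\phi^*)^{-1}(x^{i-1}y^{r-i})$, apply \Cref{lem:action}, and pull out $\det(A)$ via the row change by the transposed matrix of $A$, which just spells out the paper's final equality; the $W$-case is likewise deduced from \Cref{cor:c}.
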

\begin{proof}
	Of course the last statement follows from the statement for the $\hat{W}$-polynomial and \Cref{cor:c}.  Hence it suffices to show the formula for the $\hat{W}$-polynomial.
	
	First, observe that, given a basis $\{F_1,\ldots,F_r\}\subset V$, we have 
	$$\hat{W}(F_1,\ldots,F_r)=\det\left(\left(x^{i-1}y^{r-i}\circ F_j\right)_{1\leq i,j\leq r}\right).$$
	Then according to \Cref{lem:action}, we have 
	\begin{align*}	\phi\left(\hat{W}(F_1,\ldots,F_r)\right)= &
		\det\left(\phi\left(\phi^*\left((\phi^*)^{-1}(x^{i-1}y^{r-i})\right)\circ F_j\right)\right)\\
		= & \det\left((\phi^*)^{-1}(x^{i-1}y^{r-i})\circ\phi(F_j)\right)\\
		=  & \det(A)\cdot \hat{W}\left(\phi(F_1),\ldots,\phi(F_r)\right)
	\end{align*}
	as claimed, and the proof is complete.	
\end{proof}

\begin{remark}
	\label{rem:Determinant}
	\begin{enumerate}
		\item Note that if $A_{r}$ is a matrix representative for $\phi^*$ restricted to the degree $r-1$ graded component of $\C[x,y]$, then $\det(A_r)=\det(A_2)^{\binom{r}{2}}$.  Indeed, if $\alpha,\beta$ are the eigenvalues of $A_2$, then the eigenvalues of $A_r$ are $\{\alpha^{i-1}\beta^{r-i} \ | \ 1\leq i\leq r\}$ and hence the determinant satisfies
	$$\det(A_r)=\prod_{i=1}^r\alpha^{i-1}\beta^{r-i}=\left(\alpha\beta\right)^{\binom{r}{2}}.$$
	In particular, an alternative, cleaner formulation of \Cref{lem:action} is 
	$$\det(\phi^*)^{\binom{r}{2}}\cdot \phi\left(\hat{W}(F_1,\ldots,F_r)\right)=\hat{W}(\phi(F_1),\ldots,\phi(F_r)).$$
	We thank the anonymous referee for pointing this out to us.  
	\item Note that given $\phi, \phi^*$ as in \Cref{eq:phiaction,eq:phistaraction}, and for any fixed degree $i$, it follows from \Cref{lem:action} that the non-degenerate pairing defined by \Cref{eq:pairing} satisfies 
	$$\langle f,\phi(F)\rangle_i=
	\langle \phi^*(f),F\rangle_i, \ \forall f\in R_i, F\in S_i.$$
	In particular, $\phi^*$ is indeed the adjoint map of $\phi$ with respect to the bilinear pairing $\langle-,-\rangle_i$, for every fixed degree $i\geq 0$.
\end{enumerate}
\end{remark}



\subsection{Factorization formulas}
The following ``standard basis theorem'' for bivariate polynomials generalizes the method in the proof of \Cref{lem:WronPoly}; it is stated as part of \cite[Definition 2.3]{IY}.  The proof is straightforward and left to the reader.  

\begin{lemma}
	\label{lem:IY}
	Let $V\subset\C[X,Y]_s$ be any $r$-dimensional subspace, and let $L=aX+bY\in\C[X,Y]_1$ be any linear form.  Then there exists a strictly decreasing\footnote{In \cite{IY}, the sequence is increasing, but this is only a matter of convention.} sequence of non-negative integers 
	$$s\geq n_1(V,L)>\cdots>n_r(V,L)\geq 0$$ 
	and nonzero homogeneous polynomials $H_i(X,Y)\in\C[X,Y]_{s-n_i(V,L)}$ relatively prime to $L$ such that the set of forms
	$$\left\{F_i=L^{n_i(V,L)}\cdot H_i \ | \ 1\leq i\leq r\right\}$$
	is a basis of $V$. 	
\end{lemma}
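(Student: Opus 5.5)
The plan is to reduce to the case $L=Y$ by a change of coordinates, and then use Gaussian elimination on the coefficient matrix. For $L=Y$, the order of vanishing of a form at $Y$ is the number of trailing monomials $X^jY^{s-j}$ that are absent. More precisely, $F=\sum_j m_jX^jY^{s-j}$ is divisible by $Y^n$ exactly when $m_j=0$ for $j>s-n$.

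For general $L=aX+bY\neq 0$, choose $\phi\in\operatorname{GL}_2(\C)$ with $\phi(Y)=L$. Concretely, take $\phi$ with second column $(a,b)$ completed to an invertible matrix, so that by \Cref{eq:phiaction} we get $\phi(Y)=aX+bY$. Since $\phi$ is a graded ring automorphism of $\C[X,Y]$, the following hold.

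\begin{itemize}
\item $\phi(V)$ is again an $r$-dimensional subspace of $\C[X,Y]_s$.
\item $\phi(Y^nH)=L^n\phi(H)$.
\item $H$ is coprime to $Y$ if and only if $\phi(H)$ is coprime to $L$.
\end{itemize}

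So it suffices to prove the lemma for $\phi^{-1}(V)$ with $L=Y$ and then apply $\phi$ to the resulting basis.

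Now assume $L=Y$. Take any basis of $V$ and its $r\times(s+1)$ coefficient matrix $M_V$, which has rank $r$. Row reduce to reduced echelon form, reading the columns from the highest $X$-power downward, i.e.\ from column $s$ to column $0$. This produces a basis $F_1,\ldots,F_r$ with distinct pivot indices $p_1<p_2<\cdots<p_r$. Here $p_i$ is the largest $j$ with $m_{ij}\neq0$, and the $F_i$ are ordered so that the pivots increase.

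Set $n_i=s-p_i$. Then $s\geq n_1>\cdots>n_r\geq0$. Each $F_i$ equals $Y^{n_i}H_i$, where $H_i$ has degree $p_i$ and the nonzero coefficient $m_{ip_i}$ on $X^{p_i}$. Hence $H_i(X,0)\neq0$, which says exactly that $Y\nmid H_i$. Because the $F_i$ form a basis, this proves existence.

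If one also wants the uniqueness of the sequence (as used in the proof of \Cref{lem:WronPoly}), note that $\{n_i\}$ is exactly the set of $n$ for which $\dim(V\cap Y^n\C[X,Y]_{s-n})>\dim(V\cap Y^{n+1}\C[X,Y]_{s-n-1})$. This set depends only on $V$ and $L$.

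There is no real obstacle here. The only point requiring care is checking that the coordinate change carries $Y$ to $L$ and preserves coprimality, which holds because $\phi$ is a ring automorphism preserving degrees.
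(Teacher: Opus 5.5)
Your proof is correct, and it is the route the paper intends. The paper leaves this lemma to the reader, but it points to the basis $Y^{n_i}H_i$ with $H_i(X,0)\neq 0$ from the proof of \Cref{lem:WronPoly}, which your echelon-form construction produces, combined with the reduction to $L=Y$ by a $\operatorname{GL}_2(\C)$ change of coordinates as in the proof of \Cref{prop:WZero}. Your characterization of $\{n_i\}$ via the jumps of $\dim(V\cap Y^n\C[X,Y]_{s-n})$ is also right, since any basis of the stated form gives $\dim(V\cap Y^n\C[X,Y]_{s-n})=\#\{i \mid n_i\geq n\}$; this supplies the uniqueness that the paper asserts in \Cref{lem:WronPoly} without proof.
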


In general, strictly decreasing sequences of non-negative integers $s\geq n_1>\cdots>n_r\geq 0$ of length $r$ and size at most $s$, are in one-to-one correspondence with weakly decreasing sequences of non-negative integers $s-r+1\geq \lambda_1\geq \cdots\geq \lambda_r\geq 0$, or partitions, of length at most $r$ and part size at most $s-r+1$, where 
\begin{equation}
	\label{eq:lambda}
	\lambda_i=n_{i}-(r-i), \ 1\leq i\leq r;
\end{equation}
in other words the (Young diagram of the) partition $\lambda=(\lambda_1,\ldots,\lambda_r)$ fits inside an $r\times (s-r+1)$ rectangle.  
Let $\lambda(V,L)$ denote the partition associated to the strictly decreasing sequence $s\geq n_1(V,L)>\cdots>n_r(V,L)\geq 0$.  As usual, the size of the partition is the sum of its parts, which in our case is $$|\lambda(V,L)|=\sum_{i=1}^r\left(\lambda_i(V,L)=n_i(V,L)-(r-i)\right)=\sum_{i=1}^rn_i(V,L)-\frac{r(r-1)}{2}.$$


We shall also need the following classical result of B\^ocher \cite[Theorem II]{Bocher}, and we refer the reader to that paper for its proof.
\begin{fact}
	\label{lem:Bocher1}
	Let $u_1(X),\ldots,u_r(X)$ be any linearly independent set of analytic functions of one variable, e.g. polynomials.  Then the Wronskian determinant is not identically zero, i.e.
	$$\operatorname{Wr}(u_1,\ldots,u_r):=\det\left(\left(\frac{d^{i-1}u_j}{dX^{i-1}}\right)_{1\leq i,j\leq r}\right)\not\equiv 0.$$	
\end{fact}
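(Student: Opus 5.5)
We will use the same idea as in the proof of \Cref{lem:WronPoly} and in \Cref{lem:IY}: choose a basis adapted to the order of vanishing at a point, and then read off the lowest-order term of the Wronskian. We work locally at a point where all the $u_j$ are analytic; after translating the variable, we may take this point to be $X=0$. Each $u_j$ then has a convergent power series expansion there.

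First we reduce to a convenient basis. By the transformation rule recorded after \Cref{def:Wron}, replacing $u_1,\ldots,u_r$ by $u_j'=\sum_i a_{ij}u_i$ with $(a_{ij})$ invertible multiplies the Wronskian by $\det(a_{ij})\neq 0$. So it suffices to prove the claim for any basis of $U=\operatorname{span}\{u_1,\ldots,u_r\}$.

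Since the $u_j$ are linearly independent analytic functions, the identity theorem shows their power series at $0$ are linearly independent. Gaussian elimination on the coefficient sequences then produces a basis $v_1,\ldots,v_r$ of $U$ with
$$v_j=c_jX^{n_j}+(\text{higher order terms}), \qquad c_j\neq 0,$$
where the orders satisfy $n_1>\cdots>n_r\geq 0$. This is the power-series analogue of \Cref{lem:IY} with $L=X$: if two basis elements had the same order, subtracting a suitable multiple would raise the order of one of them. The process terminates because each order is bounded by the finite-dimensional truncation needed to separate $r$ independent series.

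Next we compute the lowest-order term. The $(i,j)$ entry of the Wronskian matrix is
$$\frac{d^{i-1}v_j}{dX^{i-1}}=c_j\,(n_j)_{i-1}\,X^{n_j-i+1}+(\text{higher order}),$$
where $(n)_{k}=n(n-1)\cdots(n-k+1)$ is the falling factorial. When $(n_j)_{i-1}=0$, the entry simply has order $>n_j-i+1$.

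Consequently every term in the Leibniz expansion of the determinant has order at least $\sum_j n_j-\binom{r}{2}$. The coefficient of $X^{\sum_j n_j-\binom r2}$ is
$$\prod_j c_j\cdot\det\bigl((n_j)_{i-1}\bigr)_{1\leq i,j\leq r}.$$
Since $(n)_{i-1}$ is a monic polynomial in $n$ of degree $i-1$, row operations reduce this matrix to the Vandermonde matrix $(n_j^{i-1})$. Its determinant is therefore $\prod_{j<k}(n_k-n_j)$, which is nonzero because the $n_j$ are distinct. Hence the Wronskian of the $v_j$ has a nonzero lowest-order term, so $\operatorname{Wr}(v_1,\ldots,v_r)\not\equiv 0$, and hence $\operatorname{Wr}(u_1,\ldots,u_r)\not\equiv 0$.

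The main point requiring care is the order bookkeeping when some falling factorials vanish, i.e.\ when $i-1>n_j$. This is handled by noting that such entries only contribute in strictly higher order, while the Vandermonde identity for falling factorials holds regardless. The existence of the echelon basis with distinct orders is routine.
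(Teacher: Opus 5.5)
Your proposal is correct. The paper gives no proof of this fact; it simply refers to B\^ocher. So your self-contained argument is a genuinely different route rather than a reconstruction of the paper's.

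What you do is the power-series version of the paper's own technique in \Cref{lem:WronPoly} and \Cref{prop:WZero}. You choose a basis adapted to order of vanishing at a point (the analogue of \Cref{lem:IY} with $L=X$), pull out the powers of $X$, and identify the coefficient of $X^{\sum_j n_j-\binom{r}{2}}$ as $\prod_j c_j$ times a Vandermonde determinant in the distinct orders $n_j$. Your handling of the entries with $(n_j)_{i-1}=0$ is fine.

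The citation is economical but a black box. Your proof gives more: the Wronskian vanishes at the base point to order exactly $\sum_j n_j-\binom{r}{2}$, with an explicit nonzero leading coefficient, which is precisely the mechanism of \Cref{prop:WZero}.

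Your computation also shows that both of the paper's appeals to B\^ocher can be avoided in the polynomial setting. The end of the proof of \Cref{prop:WZero} only needs $\operatorname{Wr}(X^{m_1},\ldots,X^{m_r})=\prod_{j<k}(m_k-m_j)\,X^{\sum_j m_j-\binom{r}{2}}\neq 0$ for distinct $m_j$. The case $L=Y$ of that argument then already yields $W(V)\not\equiv 0$, which is \Cref{cor:nonzero}, with no citation needed.

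You should state one tacit hypothesis. Passing from linear independence of the $u_j$ to linear independence of their power series at $0$ uses the identity theorem, so the common domain must be connected. On a disconnected domain, a locally constant function shows the statement itself fails; for polynomials the condition is automatic.
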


\begin{corollary}
	\label{cor:nonzero}
	For any $r$-linearly independent homogeneous $s$-forms $F_1,\ldots,F_r\in\C[X,Y]_s$, $W$-polynomial $\hat{W}$-polynomial are each not identically zero, i.e.
	$$W(F_1,\ldots,F_r;X,Y)\not\equiv 0\not\equiv\hat{W}(F_1,\ldots,F_r;X,Y).$$ 
\end{corollary}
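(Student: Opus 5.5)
By \Cref{cor:c}, $W(F_1,\ldots,F_r;X,Y)=c(r,s)\cdot\hat{W}(F_1,\ldots,F_r;X,Y)$, and $c(r,s)=\prod_{i=0}^{r-1}\frac{(s-r+1)!}{(s-i)!}$ is a nonzero rational number. It therefore suffices to show that $W(F_1,\ldots,F_r;X,Y)\not\equiv 0$, and we will deduce this from B\^ocher's \Cref{lem:Bocher1} by dehomogenizing.

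The first step is to arrange that the dehomogenized forms are still independent. Setting $Y=1$ can lose independence only if the $F_j(X,1)$ satisfy a linear relation. However, $F\mapsto F(X,1)$ is injective on $\C[X,Y]_s$: a form of degree $s$ is recovered from $F(X,1)$ by homogenizing to degree $s$, since $F(X,Y)=Y^sF(X/Y,1)$. Hence $u_j(X):=F_j(X,1)$, for $1\le j\le r$, are linearly independent polynomials in one variable. No change of coordinates is needed, so \Cref{cor:hatWaction} does not enter.

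Next, apply \Cref{lem:Bocher1} to get $\operatorname{Wr}(u_1,\ldots,u_r)\not\equiv 0$. By the remark following \Cref{def:Wron}, this Wronskian is exactly $W(F_1,\ldots,F_r;X,1)$. Concretely, $\partial^{i-1}F_j/\partial X^{i-1}$ specialized at $Y=1$ is $d^{i-1}u_j/dX^{i-1}$, and the prefactor $Y^{-r(r-1)/2}$ becomes $1$. Since $W(F_1,\ldots,F_r;X,Y)$ is a genuine polynomial by \Cref{lem:WronPoly}, and its specialization at $Y=1$ is nonzero, it is not identically zero. Then $\hat{W}=c(r,s)^{-1}W\not\equiv 0$ as well.

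The only point requiring care is the injectivity of dehomogenization, which fails without fixing the degree $s$. Here it holds because all the $F_j$ lie in the single graded piece $\C[X,Y]_s$. Everything else is a direct citation of earlier results, so I do not expect a real obstacle.
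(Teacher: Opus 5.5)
Your proposal is correct and follows the paper's proof: dehomogenize at $Y=1$, apply B\^ocher's theorem (\Cref{lem:Bocher1}) to get $W\not\equiv 0$, then pass to $\hat{W}$ via \Cref{cor:c}. You also justify two points the paper uses without comment: that $F\mapsto F(X,1)$ is injective on $\C[X,Y]_s$, and that $c(r,s)\neq 0$.
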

\begin{proof}
	If $F_1,\ldots,F_r$ are linearly independent, then so are the specializations $F_1(X,1),\ldots,F_r(X,1)$.  It follows that if $W(F_1,\ldots,F_r;X,Y)\equiv 0$ is identically zero, then so is its specialization $$W(F_1,\ldots,F_r;X,1)\equiv\operatorname{Wr}(F_1(X,1),\ldots,F_r(X,1))\equiv 0,$$ contradicting B\^ocher's \Cref{lem:Bocher1}.  Therefore $W(F_1,\ldots,F_r;X,Y)\not\equiv 0$, and hence also $\hat{W}(F_1,\ldots,F_r;X,Y)\not\equiv 0$ by \Cref{cor:c}.
\end{proof}

The next result characterizes the zeros of the $W$-polynomial; compare with \cite[Lemma 2.7]{IY}.
\begin{proposition}
	\label{prop:WZero}
	Let $V\subset\C[X,Y]_s$ be an $r$-dimensional subspace, and let $L=aX+bY\in\C[X,Y]_1$ be any linear form.  Then 
	\begin{equation}
		\label{eq:WLU}
		W(V)=L^{|\lambda(V,L)|}\cdot U(V,L)
	\end{equation}
	for some homogeneous polynomial $U(V,L)$, which is relatively prime to $L$.  
	In particular, $L$ divides $W(V)$ if and only if $L^r\cdot G\in V$ for some homogeneous polynomial $G\in\C[X,Y]_{s-r}$.
	
	The same holds for $\hat{W}(V)$.
\end{proposition}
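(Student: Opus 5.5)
Since everything is only claimed up to scalars, the plan is to reduce to the case $L=Y$ by a change of coordinates, then repeat the computation in the proof of \Cref{lem:WronPoly} with more care.

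\emph{Step 1: reduction to $L=Y$.} Choose $\phi\in\operatorname{GL}_2(\C)$ with $\phi(Y)=L$; for instance $\phi(X)=X$ and $\phi(Y)=aX+bY$ if $b\neq0$, and a swap composed with this otherwise. Then the following hold.
\begin{itemize}
\item $\phi$ is a ring automorphism and carries $Y^{n}\cdot H$ to $L^{n}\cdot\phi(H)$.
\item $H$ is prime to $Y$ if and only if $\phi(H)$ is prime to $L$.
\item Hence $\lambda(\phi(V),L)=\lambda(V,Y)$.
\end{itemize}
By \Cref{cor:hatWaction}, $\hat{W}(\phi(V))$ is a nonzero scalar multiple of $\phi(\hat{W}(V))$, and the same holds for $W$. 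So it suffices to prove \Cref{eq:WLU} for $L=Y$, and only for $\hat{W}$ or $W$, since the other follows by \Cref{cor:c}. I will work with $W$ and $L=Y$.

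\emph{Step 2: the computation for $L=Y$.} Take the basis $G_i=Y^{n_i}H_i$ from \Cref{lem:IY} with $L=Y$, so that $H_i(X,0)\neq0$. The computation in \Cref{lem:WronPoly} gives
$$W(V)=Y^{\sum n_i-\binom r2}\cdot\det\left(\left(\partial_X^{i-1}H_j\right)_{i,j}\right)=Y^{|\lambda(V,Y)|}\cdot\det\left(\left(\partial_X^{i-1}H_j\right)_{i,j}\right).$$
It remains to show that $U=\det(\partial_X^{i-1}H_j)$ is not divisible by $Y$, i.e.\ that $U(X,0)\not\equiv0$. Setting $Y=0$ commutes with $\partial_X$, so
$$U(X,0)=\operatorname{Wr}\big(h_1,\ldots,h_r\big),\qquad h_j=H_j(X,0)=c_jX^{s-n_j},\quad c_j\neq0.$$
The exponents $s-n_j$ are distinct, so these monomials are linearly independent. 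By B\^ocher's \Cref{lem:Bocher1}, or by a direct Vandermonde-type computation for monomials, the Wronskian is nonzero. Hence $U(V,Y)$ is prime to $Y$.

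\emph{Step 3: the ``in particular''.} By Step 2, $L\mid W(V)$ if and only if $|\lambda(V,L)|>0$. Since $\lambda$ is a partition with $\lambda_1\ge\cdots\ge\lambda_r$, this is equivalent to $\lambda_1\ge1$, i.e.\ $n_1(V,L)\ge r$.
\begin{itemize}
\item If $n_1\ge r$, then $F_1=L^{r}\cdot(L^{n_1-r}H_1)$ gives the required element of $V$.
\item Conversely, suppose $L^rG\in V$ with $G\neq0$. Write $G=L^kG'$ with $G'$ prime to $L$, and expand $L^{r+k}G'=\sum a_iF_i$. Since the $n_i$ are distinct, if $i_0$ is the index with $a_{i_0}\neq0$ and minimal $n_{i_0}$, then the exact power of $L$ dividing the sum is $n_{i_0}$. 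Hence $r\le r+k=n_{i_0}\le n_1$.
\end{itemize}

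The main obstacle is the non-vanishing of $U$ modulo $L$. In the coordinates $L=Y$ it reduces to the Wronskian of distinct monomials, which is nonzero, so the real care goes into Step 1: checking that $\phi$ preserves the standard-basis data and multiplies $W$ only by a nonzero scalar.
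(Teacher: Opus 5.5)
Your proof is correct and follows essentially the same route as the paper: reduce to $L=Y$ via \Cref{cor:hatWaction}, factor $Y^{n_j}$ out of the columns for the standard basis of \Cref{lem:IY}, show $U(X,0)\neq 0$ because it is a Wronskian of distinct monomials, and read off the ``in particular'' from $|\lambda(V,L)|>0\iff n_1(V,L)\geq r$. Your explicit order-of-vanishing argument for the converse direction (that $L^rG\in V$ forces $n_1(V,L)\geq r$) fills in a step the paper simply attributes to \Cref{lem:IY}.
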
 
\begin{proof}
	Of course the last statement follows from \Cref{cor:c}, so we focus on the first two statements.  
	First, we show how \Cref{eq:WLU} implies the second statement.  Assume that $L^r\cdot G\in V$.  Then according to \Cref{lem:IY}, we must have $n_1(V,L)\geq r$, and hence $\lambda_1(V,L)=n_1(V,L)-(r-1)\geq 1$, and hence $|\lambda(V,L)|>0$.  Therefore, according to \Cref{eq:WLU}, $L$ must divide $W(V)$.  Conversely, assume that $L$ divides $W(V)$.  Then, working backwards, $|\lambda(V,L)|>1$, according to \Cref{eq:WLU}, and hence $\lambda_1(V,L)=n_1(V,L)-(r-1)\geq 1$.  Therefore, according to \Cref{lem:IY}, we must have $F_1=L^{n_1(V,L)}\cdot H_1=L^r\cdot G\in V$ for some homogeneous polynomial $G$.

	It remains to show \Cref{eq:WLU}.  It suffices to prove it in the case where $L=Y$, by \Cref{cor:hatWaction}.  In this case, let $\{F_1,\ldots,F_r\}\subset V$ be a basis as in \Cref{lem:IY}, where $F_i=Y^{n_i(V,Y)}\cdot H_i$ where $s\geq n_1>\cdots>n_r\geq 0$ and $H_i$ is relatively prime to $Y$.  As we observed in the proof of \Cref{lem:WronPoly}, we have 
	\begin{align*}
		Y^{r(r-1)/2}\cdot W(F_1,\ldots,F_r;X,Y)= & \det\left(\left(\frac{\partial^{i-1}}{\partial X^{i-1}}\left(Y^{n_j}\cdot H_j\right)\right)_{1\leq i,j\leq r}\right)\\
		= & \det\left(\left(\frac{\partial^{i-1}H_j}{\partial X^{i-1}}\right)_{1\leq i,j\leq r}\right)\cdot \det\left(\left(\begin{array}{ccc} Y^{n_1(V,Y)} & \cdots & 0\\ \vdots & \ddots & \vdots\\ 0 & \cdots & Y^{n_r(V,Y)}\\ \end{array}\right)\right)\\
		= & U(V,Y)\cdot Y^{\sum_{i=1}^rn_i(V,Y)} 
	\end{align*}
	Therefore we have 
	$$W(V)=W(F_1,\ldots,F_r;X,Y)=Y^{|\lambda(V,Y)|}\cdot U(V,Y).$$
	It remains to see that the polynomial $U(V,Y)=U(V,Y;X,Y)$ does not vanish at $Y=0$.  Since evaluation at $Y=0$ commutes with the partial differentiation operator $\partial/\partial X$, and since $H_j(X,0)=X^{s-n_j(V,Y)}$, it follows that 
	$$U(V,Y;X,0)=\det\left(\left(\frac{\partial^{i-1}X^{s-n_j(V,Y)}}{\partial X^{i-1}}\right)_{1\leq i,j\leq r}\right)=\operatorname{Wr}(X^{s-n_1(V,Y)},\ldots,X^{s-n_r(V,Y)}).$$
	Since all the $n_i(V,Y)$'s are distinct, it follows that the functions $\{X^{s-n_i(V,Y)} \ | \ 1\leq i\leq r\}$ are linearly independent, hence by B\^ocher's \Cref{lem:Bocher1}, it follows that $U(V,Y;X,0)\not\equiv 0$, and the result follows.
\end{proof}
We are now in a position to establish factorization formulas for the $W$- and $\hat{W}$-polynomials.
\begin{proposition}
	\label{thm:WhatW}
	For every $r$-dimensional subspace $V\subset\C[X,Y]_s$, we have the following equivalences of projective classes of polynomials: 
	$$\left[W(V)\right]=\left[\prod_{[L]\in\P\left(\C[X,Y]_1\right)}L^{|\lambda(V,L)|}\right]=\left[\hat{W}(V)\right]$$
	where the product in the middle is taken over all projective classes of linear forms in $\P(\C[X,Y]_1)$.
\end{proposition}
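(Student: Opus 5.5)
The plan is to obtain the factorization from \Cref{prop:WZero} by unique factorization in $\C[X,Y]$, and then to use \Cref{cor:c} to pass from $W(V)$ to $\hat{W}(V)$.

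First, by \Cref{cor:nonzero}, $W(V)$ is a nonzero homogeneous form. Its degree is $N=r(s+1-r)$ by \Cref{lem:WronPoly}. Over $\C$, every nonzero bivariate form factors completely into linear forms, and the factorization is unique up to scalars. So
$$W(V)=c\prod_{[L]}L^{e_L}$$
for some $c\neq 0$, where $e_L$ is the multiplicity of the class $[L]$ as a factor. Only finitely many $e_L$ are nonzero, and $\sum_L e_L=N$.

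Next, fix a linear form $L$. \Cref{prop:WZero} writes $W(V)=L^{|\lambda(V,L)|}\cdot U(V,L)$ with $U(V,L)$ relatively prime to $L$. This means $L^{|\lambda(V,L)|}$ is exactly the highest power of $L$ dividing $W(V)$, so $e_L=|\lambda(V,L)|$. We also need to check that this exponent depends only on the class $[L]$. It does, since replacing $L$ by a nonzero multiple does not change the divisibility conditions $L^{n}\mid F$ that determine the sequence $n_i(V,L)$ in \Cref{lem:IY}.

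These two steps give $[W(V)]=\left[\prod_{[L]}L^{|\lambda(V,L)|}\right]$. The product is really finite: since each $e_L=|\lambda(V,L)|$, all but finitely many of the $|\lambda(V,L)|$ vanish, because a nonzero form has only finitely many linear factors. Finally, \Cref{cor:c} gives $W(V)=c(r,s)\hat{W}(V)$ with $c(r,s)\neq 0$, so $[W(V)]=[\hat{W}(V)]$.

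The only place that needs care is the well-definedness of the $n_i(V,L)$ in \Cref{lem:IY}, on which \Cref{prop:WZero} implicitly relies. I would address it by characterizing the sequence intrinsically: $\{n_1,\ldots,n_r\}$ is the set of integers $n$ for which $V\cap L^{n}\C[X,Y]_{s-n}$ strictly contains $V\cap L^{n+1}\C[X,Y]_{s-n-1}$. This depends only on $V$ and $[L]$. With that in hand, the rest is formal.
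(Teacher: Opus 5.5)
Your proposal is correct and follows essentially the paper's argument: the first equality comes from \Cref{prop:WZero}, which gives the exact power of each linear form $L$ dividing $W(V)$, and the second from \Cref{cor:c}. You also make explicit the routine points the paper leaves tacit: nonvanishing of $W(V)$ via \Cref{cor:nonzero}, complete factorization of binary forms over $\C$, finiteness of the product, and an intrinsic characterization of the jumps $n_i(V,L)$ showing they are well defined and depend only on $[L]$.
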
	
\begin{proof}
	The first equality follows immediately from \Cref{prop:WZero}, and the second from \Cref{cor:c}.
\end{proof}

\begin{remark}
	\label{rem:SchubEisHarTony}
	\Cref{prop:WhatWPluck} together with \Cref{cor:nonzero} imply that the $W$- and $\hat{W}$-polynomials define morphisms of projective varieties
	$$[W],\left[\hat{W}\right]\colon\operatorname{Gr}_r\left(\C[X,Y]_s\right)\rightarrow\P\left(\C[X,Y]_N\right),$$
	which are exactly the same, according to \Cref{cor:c}.	
	Moreover, according to Iarrobino and Yam\'eogo \cite[Proposition 2.15]{IY}, this morphism is a finite covering map of degree 
	$$d_{r,s+1}=N!\cdot \frac{1!\cdot 2!\cdots r!}{(s+1-r)!\cdot(s+2-r)!\cdots s!},$$ 
	which can be verified using Schubert calculus.  This result, and its connection to Schubert calculus, seems to have been  first published in 1983 by Eisenbud and Harris \cite[Theorem 2.3]{EH}, although they state there that Iarrobino had independently obtained the same result in an unpublished work ten years earlier.
\end{remark}

\section{Duality}
\label{sec:Dual}
The non-degenerate pairing between $\C[x,y]_s$ and $\C[X,Y]_s$ given by partial differentiation $\langle f,F\rangle_s=f\circ F$ gives a canonical isomorphism $\C[x,y]_s\cong \left(\C[X,Y]_s\right)^*$.  Choosing the linear isomorphism $\psi\colon\C[X,Y]_s\rightarrow\C[x,y]_s$ defined by $\psi(X^iY^{s-i})=(-1)^iy^ix^{s-i}$, i.e. change of coordinates sending $X\to -y$ and $Y\to x$, defines another non-degenerate pairing on $\C[X,Y]_s$ defined by 
\begin{equation}
	\label{eq:Pairing}
	\left\langle F, G\right\rangle_s^{\psi} =\langle \psi(F),G\rangle_s=\psi(F)\circ G.
\end{equation}
Explicitly, if $F=\sum_{j=0}^sa_jX^jY^{s-j}$ and $G=\sum_{j=0}^sb_jX^jY^{s-j}$, then 
$$\langle F,G\rangle_s^{\psi} = s!\cdot \sum_{j=0}^s(-1)^j\frac{a_{j}b_{s-j}}{\binom{s}{j}}.$$

Let $V\subset\C[X,Y]_s$ be any $r$-dimensional subspace, and define its orthogonal complement by 
$$V^\perp=\left\{F\in \C[X,Y]_s \ | \ \langle F,G\rangle_s^{\psi} \equiv 0, \ \forall G\in V\right\}.$$
For any $r$-subset $I\in\binom{[s]_0}{r}$, define the $s+1-r$-subsets
\begin{align*}
	I^c= & \{j\in [s]_0 \ | \ j\notin I\}\\
	I^\perp= & \{k\in [s]_0 \ | \ k=s-j, \ j\in I^c\}
\end{align*}
The following lemma appears in \cite[Lemma 1.11]{Karp0} where it is attributed to Hochster, who in turn has attributed it to Hilbert.  A proof has been included here for the sake of completeness.
\begin{lemma}
	\label{lem:DeltaJPerp}
	Let $M_V$ be an $r\times (s+1)$ matrix representative for an $r$-dimensional subspace $V\subset\C[X,Y]_s$.  Then there exists an $(s+1-r)\times (s+1)$ matrix representative $M_{V^\perp}$ for the subspace $V^\perp$ such that for any $r$-subset $I\in\binom{[s]_0}{r}$ we have
	\begin{equation}
		\label{eq:DeltaMVPerp}
		\Delta_I(M_V)=\prod_{j\in I^c}\binom{s}{j}^{-1}\cdot \Delta_{I^\perp}(M_{V^\perp}).
	\end{equation}	
\end{lemma}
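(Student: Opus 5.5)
The approach is to reduce \Cref{eq:DeltaMVPerp} to the classical complementary-minor duality between a full-rank matrix and a matrix whose rows span its kernel. The weights $\binom{s}{j}^{-1}$ and the reflection $j\mapsto s-j$ come entirely from the antidiagonal Gram matrix of the pairing $\langle-,-\rangle_s^{\psi}$. Throughout, we use the freedom to rescale $M_{V^\perp}$ by a nonzero constant: any $I$-independent constant or sign that appears can be absorbed.

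\textbf{Step 1 (the Gram matrix).} Write the pairing in matrix form. Let $K$ be the $(s+1)\times(s+1)$ antidiagonal matrix with entry $(-1)^j\binom{s}{j}^{-1}$ in position $(j,s-j)$. Then $\langle F,G\rangle_s^{\psi}=s!\cdot a^T K b$ for coefficient vectors $a,b$. Hence $F\in V^\perp$ if and only if $N a=0$, where $N:=M_V K^T$. So $V^\perp$ is the null space of the $r\times(s+1)$ matrix $N$, which has full rank $r$. The columns of $N$ are those of $M_V$, reversed and weighted: column $s-j$ of $N$ is $(-1)^j\binom{s}{j}^{-1}$ times column $j$ of $M_V$. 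Consequently, for each $r$-subset $I$, writing $s-I=\{s-i\mid i\in I\}$,
$$\Delta_{s-I}(N)=\epsilon_r\cdot\prod_{j\in I}(-1)^j\binom{s}{j}^{-1}\cdot\Delta_I(M_V).$$
Here $\epsilon_r=(-1)^{\binom r2}$ is the sign of reversing $r$ columns, which depends only on $r$.

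\textbf{Step 2 (classical duality).} For a full-rank $r\times n$ matrix $N$, there is an $(n-r)\times n$ matrix $N'$ whose rows span $\ker N$ and which satisfies
$$\Delta_J(N')=c\cdot(-1)^{\sum_{j\in J}j}\,\Delta_{J^c}(N)$$
for all $(n-r)$-subsets $J$, where $c$ is a nonzero constant independent of $J$. To prove this, first note that left multiplication by $\operatorname{GL}$ scales all maximal minors uniformly, so it suffices to treat one representative. Choose an $r$-subset $I_0$ with $\Delta_{I_0}(N)\neq0$ and row reduce so that the columns of $N$ indexed by $I_0$ form the identity. Define $N'$ to have the identity in the columns of $I_0^c$ and, in the columns of $I_0$, the entries of $-A^T$, where $A$ is the remaining block of $N$. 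Then $NN'^T=0$. Every maximal minor of $N$ and of $N'$ is, up to sign, a minor of $A$. Alternatively, apply Jacobi's complementary minor theorem to the invertible matrix obtained by stacking $N$ on top of a complement.

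\textbf{Step 3 (assembling).} Take $M_{V^\perp}:=N'$ and $J=I^\perp=(s-I)^c$. Combining Steps 1 and 2, and using $\sum_{k\in(s-I)^c}k=\binom{s+1}{2}-\sum_{i\in I}(s-i)$, the $I$-dependent signs are $(-1)^{\sum_{i\in I}i}$ from Step 1 and $(-1)^{rs-\sum_{i\in I}i}$ from Step 2, and these cancel. We obtain
$$\Delta_I(M_V)=\text{const}\cdot\prod_{j\in I}\binom{s}{j}\cdot\Delta_{I^\perp}(M_{V^\perp}).$$
Since $\prod_{j\in I}\binom sj=\prod_{j=0}^s\binom sj\cdot\prod_{j\in I^c}\binom sj^{-1}$, and $\prod_{j=0}^s\binom sj$ does not depend on $I$, rescaling $M_{V^\perp}$ by the total constant gives exactly \Cref{eq:DeltaMVPerp}.

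The main obstacle is the sign bookkeeping in Step 2, namely verifying that the sign is precisely $(-1)^{\sum J}$ up to a global constant. I would handle this by computing in the row-reduced form with $I_0=\{0,\ldots,r-1\}$ after a column permutation, and tracking the sign of the permutation that sorts $J\cup J^c$.
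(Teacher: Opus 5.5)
Your proposal is correct and is essentially the paper's argument: the paper's antidiagonal matrix $P_s$ is exactly your $K^{-1}$, and it likewise pairs this binomially weighted reflection with Jacobi's complementary-minor identity for a kernel matrix, the only difference being that it reflects after passing to the kernel ($M_{V^\perp}=\tilde{M}_V P_s$ with $M_V\tilde{M}_V^T=0$) whereas you reflect before ($N=M_VK^T$). Tracking only the $I$-dependent signs and absorbing everything else by rescaling a single row of $M_{V^\perp}$ is a mild simplification of the paper's exact global-sign computation, and it also absorbs a harmless slip in your Step 1, where $N=M_VK^T$ actually carries the weight $(-1)^{s-j}$ rather than $(-1)^j$ (a uniform factor $(-1)^{rs}$ on all minors).
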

\begin{proof}
	First note that given an $r\times (s+1)$ matrix $M_V$, a column vector $(a_0,\ldots,a_s)^T\in\R^{s+1}$ is in the kernel of $M_V$ if and only if the row vector
	$$(a_0,\ldots,a_s)\cdot P_s$$
	is in the row space of $M_{V^\perp}$, where $P_s$ is the $(s+1)\times (s+1)$ matrix with $(-1)^j\binom{s}{j}$ in the $j^{th}$ column and the $(s-j)^{th}$ row and zeros elsewhere, i.e. $$P_s=\left(\begin{array}{ccccc} 0 & \cdots & 0 & \cdots  & (-1)^s\binom{s}{s}\\
		\vdots & & \vdots & \iddots & \vdots\\
		0 & \cdots & (-1)^j\binom{s}{j} & \cdots & 0\\
		\vdots & \iddots & \vdots & & \vdots\\
		(-1)^0\binom{s}{0} & \cdots & 0 & \cdots & 0\\ \end{array}\right).$$
	If $M_V$ has rank $r$, choose and fix an $(s+1-r)\times (s+1)$ matrix $W_V$ for which the $(s+1)\times (s+1)$ matrix 
	$M=\left(\begin{array}{c} M_V\\
		\hline
		W_V\\ \end{array}\right)$ is invertible of determinant $1$.  Then its inverse satisfies $M^{-1}=\left(\begin{array}{c|c} W_V^* & \tilde{M}_{V}^T\\ \end{array}\right)$, where $\tilde{M}_V$ is an $(s+1-r)\times (s+1)$ matrix satisfying $M_V\cdot\tilde{M}_V^T=0$, and hence $\tilde{M}_{V}\cdot P_s=M_{V^\perp}$.  Then in particular, for any $k$, $1\leq k\leq  s+1$, and for any $k$-subsets $I,J\in\binom{[s+1]}{k}$ (note for the moment we are shifting our indexing to start at $1$ instead of zero), Jacobi's determinantal identity states
	\begin{equation}
		\label{eq:Jacobi}
		\Delta_{IJ}\left(M\right)=(-1)^{\sum I+\sum J}\cdot \det(M)\cdot \Delta_{J^cI^c}(M^{-1}).
	\end{equation}
	Choosing $k=r$ and $I=[r]$, we get 
	\begin{align*}
		\Delta_J(M_V)= & (-1)^{\binom{r+1}{2}+\sum J}\cdot \Delta_{J_c}(\tilde{M}_V=M_{V^\perp}\cdot P_s^{-1})\\
		= & (-1)^{\binom{r+1}{2}+\sum J}\cdot \Delta_{J^\perp}(M_{V^\perp})\cdot \left(\prod_{j\in J^c}(-1)^{s-j+1}\binom{s}{j-1}^{-1}\right)\cdot (-1)^{\flo{s+1-r}}\\
		= & (-1)^{\binom{r+1}{2}+\sum J+\sum(s+1-J^c)+\flo{s+1-r}}\cdot \prod_{j\in J^c}\binom{s}{j-1}^{-1}\cdot\Delta_{J^\perp}(M_{V^\perp})\\
		= & (-1)^{\binom{r+1}{2}+\binom{s+2}{2}+(s+1)(s+1-r)+\flo{s+1-r}}\cdot \prod_{j\in J^c}\binom{s}{j-1}^{-1}\cdot \Delta_{J^\perp}(M_{V^\perp}).
	\end{align*}
	Shifting our indexing back down to zero, i.e. setting $I=J-1$, and noting that the sign is even, i.e. 
	$$\binom{r+1}{2}+\binom{s+2}{2}+(s+1)(s+1-r)+\flo{s+1-r}\equiv 0 \ \text{mod} \ 2$$
	yields the desired result.
\end{proof}

For any $r$-subset $I\in\binom{[s]_0}{r}$, we have 
\begin{equation}
	\label{eq:ILambda}
	\lambda(I^c)=\tilde{\lambda}(I)'
\end{equation}
where $I_c\in\binom{[s]_0}{s+1-r}$ is the complement of $I$ and $\tilde{\lambda}=(s+1-r-\lambda_r,\ldots,s+1-r-\lambda_1)$ is the complementary partition to $\lambda$ inside the $r\times (s+1-r)$ rectangle, and $\tilde{\lambda}'=(\tilde{\lambda}_1',\ldots,\tilde{\lambda}'_{s+1-r})$ is its conjugate partition, i.e.
$$\tilde{\lambda}_k'=\#\left\{j \ | \ \tilde{\lambda}_j\geq k\right\}.$$
To see this, suppose that $I=\left\{0\leq i_1<\cdots<i_r\leq s\right\}$ and $I^c=\left\{0\leq b_1<\cdots<b_{s+1-r}\leq s\right\}$ so that $I\sqcup I^c=[s]_0$.  Note that for each $j$, we have 
$$b_j-(j-1)=\#\{k \ | \ i_k<b_j\}=\#\{k \ |  \ i_k-(k-1)\leq j+1\}.$$  
It follows that for each $j$, $1\leq j\leq s+1-r$ we have 
\begin{align*}
	\lambda(I^c)_j= & b_{s+1-r-j}-(s-r-j)\\
	= & \#\{k \ | \ i_k<b_{s+1-r-j}\}\\
	= & \#\{k \ | \ i_k-(k-1)\leq s+1-r-j\}\\
	= & \#\{k \ | \ s+1-r-\lambda(I)_{r-k}\geq j\}\\
	= & \#\{k \ | \ \tilde{\lambda}(I)_k\geq j\}\\
	= & \tilde{\lambda}(I)'_{j}
\end{align*}
as claimed.
Also, it is straightforward to see that $\lambda(I^\perp)=\tilde{\lambda}(I^c)$, and hence it follows that we have 
$$\lambda(I^\perp)=\tilde{\lambda}(I^c)=\lambda(I)'.$$
In particular, it follows, from their description in terms of partitions in \Cref{rem:combinatorial}, that we have $N(I^\perp)=\hat{N}(I)$.  Putting this together with \Cref{lem:DeltaJPerp} and \Cref{prop:WhatWPluck}, we have proved the following result.
\begin{proposition}
	\label{prop:WPerpHatW}
	With $V\subset\C[X,Y]_s$, $M_V$ and $M_{V^\perp}$ defined as above, we have 
	$$W(M_{V^\perp};X,Y)=\kappa(r,s)\cdot \hat{W}(M_V;X,Y)$$
	where $$\kappa(r,s)=\frac{\prod_{i=0}^s\binom{s}{i}\prod_{i=1}^{s-r+1}(i-1)!}{\left(s(s-1)\cdots(s-r+2)\right)^r}.$$
\end{proposition}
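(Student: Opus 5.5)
The plan is to compare the two Plücker expansions of \Cref{prop:WhatWPluck} term by term. Apply the first formula to the $(s+1-r)$-dimensional subspace $V^\perp$, using the representative $M_{V^\perp}$ from \Cref{lem:DeltaJPerp}. Its expansion is a sum over $(s+1-r)$-subsets $K\in\binom{[s]_0}{s+1-r}$. The map $I\mapsto I^\perp$ is a bijection from $\binom{[s]_0}{r}$ onto $\binom{[s]_0}{s+1-r}$, so we may reindex that sum by $K=I^\perp$:
$$W(M_{V^\perp})=\sum_{I}\prod_{i=1}^{s+1-r}(i-1)!\cdot N(I^\perp)\cdot X^{|\lambda(I^\perp)|}Y^{r(s+1-r)-|\lambda(I^\perp)|}\cdot\Delta_{I^\perp}(M_{V^\perp}).$$
Here $r$ is replaced by $s+1-r$, so the rectangle becomes $(s+1-r)\times r$ and the total degree is again $N$.

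Next, rewrite each term using facts already established.
\begin{enumerate}
\item From $\lambda(I^\perp)=\lambda(I)'$ we get $|\lambda(I^\perp)|=|\lambda(I)|$, so the monomials agree with those in the $\hat W(M_V)$ expansion.
\item From \Cref{rem:combinatorial} we get $N(I^\perp)=\hat N(I)$.
\item From \eqref{eq:DeltaMVPerp} we get $\Delta_{I^\perp}(M_{V^\perp})=\prod_{j\in I^c}\binom{s}{j}\cdot\Delta_I(M_V)$.
\end{enumerate}
After these substitutions, the coefficient of $\hat N(I)X^{|\lambda|}Y^{N-|\lambda|}\Delta_I(M_V)$ is $\prod_{i=1}^{s+1-r}(i-1)!\prod_{j\in I^c}\binom{s}{j}$ on the $W(M_{V^\perp})$ side. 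On the $\hat W(M_V)$ side it is $\prod_{i\in I}\frac{i!(s-i)!}{(s-r+1)!}$.

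The key step is to show that the ratio of these two coefficients does not depend on $I$. Write $\prod_{j\in I^c}\binom{s}{j}=\prod_{j=0}^s\binom{s}{j}\big/\prod_{i\in I}\binom{s}{i}$. Since $\binom{s}{i}^{-1}=\frac{i!(s-i)!}{s!}$, this becomes
$$\prod_{j\in I^c}\binom{s}{j}=\prod_{j=0}^s\binom{s}{j}\cdot\prod_{i\in I}\frac{i!(s-i)!}{s!}=\prod_{j=0}^s\binom{s}{j}\cdot\left(\frac{(s-r+1)!}{s!}\right)^{r}\prod_{i\in I}\frac{i!(s-i)!}{(s-r+1)!},$$
because $|I|=r$. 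Moreover $\frac{s!}{(s-r+1)!}=s(s-1)\cdots(s-r+2)$. So every coefficient on the $W(M_{V^\perp})$ side equals the corresponding $\hat W(M_V)$ coefficient times
$$\frac{\prod_{i=0}^s\binom{s}{i}\prod_{i=1}^{s-r+1}(i-1)!}{(s(s-1)\cdots(s-r+2))^r}=\kappa(r,s).$$
Summing over $I$ then gives the proposition.

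The main obstacle is that everything rests on the earlier bookkeeping being exactly right. First, $N(I^\perp)=\hat N(I)$ must hold as stated, which requires $\lambda(I^\perp)=\lambda(I)'$ with the paper's conventions for $N$ in the $(s+1-r)$-row setting. I would double-check this against the combinatorial description in \Cref{rem:combinatorial}: $N(I^\perp)$ counts SSYT of shape $\lambda(I)'$ with entries in $[s+1-r]$, which is exactly $\hat N(I)$. Second, the sign in \Cref{lem:DeltaJPerp} must indeed be trivial, as claimed there. If either fails, $\kappa$ would pick up an $I$-independent sign or factor. I would sanity-check the case $r=1$, where $\hat W(F)=F$, against a direct computation of $W(V^\perp)$ for $V=\operatorname{span}\{F\}$.
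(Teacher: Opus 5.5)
Your proposal is correct and follows the paper's argument: reindex the Pl\"ucker expansion of $W(M_{V^\perp})$ by $I\mapsto I^\perp$, use $\lambda(I^\perp)=\lambda(I)'$ and $N(I^\perp)=\hat N(I)$, and substitute \Cref{lem:DeltaJPerp}. The paper leaves the computation of the constant implicit; your check that the coefficient ratio is the $I$-independent constant $\kappa(r,s)$ (via $\prod_{j\in I^c}\binom{s}{j}=\prod_{j=0}^s\binom{s}{j}\cdot\prod_{i\in I}\binom{s}{i}^{-1}$ and $|I|=r$) supplies it correctly.
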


\begin{remark}
	\label{rem:Morphism}
	\Cref{lem:DeltaJPerp} implies that the correspondence $V\mapsto V^\perp$ defines an isomorphism of Grassmannian varieties $\operatorname{Gr}_r\left(\C[X,Y]_s\right)\rightarrow\operatorname{Gr}_{s+1-r}\left(\C[X,Y]_s\right)$, and  \Cref{prop:WPerpHatW}, together with \Cref{cor:c}, shows that under this isomorphism, the projective class of the $\hat{W}$-polynomial (and hence also the $W$-polynomial) is preserved.
\end{remark}

\section{Example and Application}
\label{sec:ExApp}	
\begin{example}
	\label{ex:GR24}
	Take $r=2$ and $s=4$, and let $V\subset\C[X,Y]_4$ be the $2$-dimensional subspace spanned by 
	\begin{align*}
		G_1= & Y^4+2XY^3-X^3Y\\
		G_2= & X^2Y^2+X^3Y+X^4
	\end{align*}
	so that 
	$$M_V=\left(\begin{array}{rrrrr} 1 & 2 & 0 & -1 & 0\\ 0 & 0 & 1 & 1 & 1\\ \end{array}\right).$$
	We can extend $M_V$ to an invertible matrix with determinant one by, say,
	$$M=\left(\begin{array}{rrrrr} 1 & 2 & 0 & -1 & 0\\ 0 & 0 & 1 & 1 & 1\\
		0 & 1 & 0 & 0 & 0\\ 0 & 0 & 0 & 1 & 0\\ 0 & 0 & 0 & 0 & -1\\ \end{array}\right)$$
	which has inverse
	$$M^{-1}=\left(\begin{array}{rrrrr} 1 & 0 & -2 & 1 & 0\\ 0 & 0 & 1 & 0 & 0\\ 0 & 1 & 0 & -1 & 1\\ 0 & 0 & 0 & 1 & 0\\ 0 & 0 & 0 & 0 & -1\\ \end{array}\right)$$
	so that $\tilde{M}_V=\left(\begin{array}{rrrrr}
		-2 & 1 & 0 & 0 & 0\\ 1 & 0 & -1 & 1 & 0\\ 0 & 0 & 1 & 0 & -1\\ \end{array}\right)$.  It follows that 
	$$M_{V^\perp}=\left(\begin{array}{rrrrr} 0 & 0 & 0 & -4 & -2\\ 0 & -4 & -6 & 0 & 1\\ -1 & 0 & 6 & 0 & 0\\ \end{array}\right)$$
	corresponding to the $s-r+1=3$-dimensional subspace $V^\perp\subset\C[X,Y]_4$ spanned by 
	\begin{align*}
		F_1= & -4X^3Y-2X^4\\
		F_2= & -4XY^3-6X^2Y^2+X^4\\
		F_3= & -Y^4+6X^2Y^2.
	\end{align*}
	Then we compute 
	\begin{align*}
		\hat{W}(M_V;X,Y)= & \det\left(\begin{array}{rr}4Y^3+6XY^2-X^3 & 2X^2Y+X^3\\
			2Y^3-3X^2Y & 2XY^2+3X^2Y+4X^3\\ \end{array}\right)\\
		= & - 4X^6 + 28X^4Y^2 + 32X^3Y^3 + 20X^2Y^4 + 8XY^5
	\end{align*}
	and 
	\begin{align*}
		W(M_{V^\perp};X,Y)= & \frac{1}{Y^3}\cdot \det\left(\begin{array}{rrr} 
			-4X^3Y-2X^4 & -4XY^3-6X^2Y^2+X^4 & -Y^4+6X^2Y^2\\
			-12X^2Y-8X^3 & -4Y^3-12XY^2+4X^3 & 12XY^2\\
			-24XY-24X^2 & -12Y^2+12X^2 & 12Y^2\\ \end{array}\right)\\
		= & - 48X^6 + 336X^4Y^2 + 384X^3Y^3 + 240X^2Y^4 + 96XY^5
	\end{align*}
	and 
	\begin{align*}
		\kappa(r,s)=\kappa(2,4)= & \frac{\binom{4}{0}\binom{4}{1}\binom{4}{2}\binom{4}{3}\binom{4}{4}(0!)(1!)(2!)}{4^2}=12,
	\end{align*}
	and indeed we have $W(M_{V^\perp})=\kappa(2,4)\cdot \hat{W}(M_V)$.  We can also compute 
	\begin{align*}
		W(M_V;X,Y)= & \frac{1}{Y}\cdot \det\left(\begin{array}{rr}
			Y^4+2XY^3-X^3Y & X^2Y^2+X^3Y+X^4\\ 
			2Y^3-3X^2Y & 2XY^2+3X^2Y+4X^3\\ \end{array}\right)\\
		= & - X^6 + 7X^4Y^2 + 8X^3Y^3 + 5X^2Y^4 + 2XY^5
	\end{align*}
	and 
	\begin{align*}
		c(r,s)=c(2,4)=\frac{(3!)\cdot (3!)}{(4!)\cdot (3!)}=\frac{1}{4}
	\end{align*}
	and we have $W(M_V)=c(2,4)\cdot \hat{W}(M_V)$, as predicted by \Cref{cor:c}.
\end{example}

For a fixed positive integer $d$, and fixed homogeneous $d$-form $F\in\C[X,Y]_d$, let $\Ann(F)\subset\C[x,y]=R$ denote the annihilator ideal, and $A=A_F=R/\Ann(F)$ the codimension two graded Artinian Gorenstein algebra of socle degree $d$.  By Macaulay duality theory, every graded Artinian Gorenstein algebra $A$ of socle degree $d$ defined over $\C$ has the form $A=A_F$ for some $F\in\C[X,Y]$.  Recall that a graded Artinian Gorenstein algebra of socle degree $d$ satisfies the strong Lefschetz property if there exists a linear form $\ell\in A_1$ for which the multiplication maps $\times\ell^{d-2i}\colon A_i\rightarrow A_{d-i}$ are isomorphisms for each $0\leq i\leq \flo{d}$.  The following result was proved by Iarrobino \cite[Theorem 2.9]{I}.
\begin{proposition}
	\label{prop:Tony}
	Every codimension two graded Artinian Gorenstein algebra $A$ satisfies the strong lefschetz property.
\end{proposition}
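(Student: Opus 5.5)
We will use the Hessian criterion for strong Lefschetz, together with \Cref{cor:c} and \Cref{prop:WZero}, to reduce the problem to a statement about the degree of $W$-polynomials. Write $A=A_F$ with $F\in\C[X,Y]_d$. By Gorenstein symmetry, $\times\ell^{d-2i}\colon A_i\to A_{d-i}$ is an isomorphism if and only if the pairing $A_i\times A_i\to A_d\cong\C$, $(a,b)\mapsto ab\ell^{d-2i}$, is nondegenerate.

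We first make this pairing concrete. Take a basis of $A_i$ given by the images of monomials $x^{j}y^{i-j}$ with $j$ ranging over an index set $J$ of size $r_i=\dim A_i$, and take $\ell=ax+by$. Up to a nonzero scalar, the Gram matrix of the pairing is the restricted Hessian
$$\left(x^{j}y^{i-j}x^{k}y^{i-k}\circ F\right)_{j,k\in J}$$
evaluated at $(X,Y)=(a,b)$. This holds because $\ell^{d-2i}\circ G=(d-2i)!\,G(a,b)$ for $G$ of degree $d-2i$. So it suffices to show that, for each $i\leq \flo{d}$, the corresponding determinant is a nonzero polynomial in $(X,Y)$. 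Once that is known, each $i$ excludes only finitely many points $[a:b]$, and a generic $\ell$ works simultaneously for all $i$.

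To prove nonvanishing, set $r=r_i$ and $s=d-i$, and let $V_i=\{g\circ F: g\in R_i\}\subset\C[X,Y]_s$. Then $\dim V_i=r_i$. In codimension two the Hilbert function is $r_i=\min(i+1,\,t)$ for some $t\le d-i+1$, so the condition $r\le s+1$ holds. The natural basis of $V_i$ is $F_j=x^jy^{i-j}\circ F$ for $j\in J$. Then $\hat{W}(F_j;X,Y)$ has entries $x^{\alpha}y^{r-1-\alpha}\circ F_j$, $0\le\alpha\le r-1$.

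The key technical point is to match these entries with the Gram matrix above, since the Gram matrix is indexed by $J\times J$ rather than by $\alpha\in[0,r-1]$. I would handle this by choosing the monomial basis $J$ to be $\{0,\dots,r-1\}$ after a general linear change of coordinates, applying \Cref{cor:hatWaction} so that $\hat{W}$ changes only by the factor $\det(A)$ and the change of coordinates. In generic coordinates, $x^{j}y^{i-j}$ for $0\le j\le r-1$ span $A_i$: the annihilator in degree $i$ has no nonzero element divisible by $y^{i-r+1}$ for generic coordinates, by a dimension count on the Hilbert function. Then the row operators $x^\alpha y^{r-1-\alpha}$ multiplied by $x^jy^{i-j}$ produce $x^{\alpha+j}y^{i+r-1-\alpha-j}$. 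This product matches a Gram-type entry only when $r-1=i$. When $r<i+1$, I would instead note that $x^{\alpha}y^{i-\alpha}$ with $\alpha<r$ also form a basis of $A_i$, so the Gram matrix equals $(x^{\alpha}y^{i-\alpha}\circ F_j)$. After multiplying by the invertible transition matrix from the operators $x^\alpha y^{r-1-\alpha}$ applied to $y^{i-r+1}$-shifted forms, this is a $\hat{W}$ of the space $y^{i-r+1}\circ V_i$.

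With the Gram determinant identified with a nonzero multiple of the $\hat{W}$-polynomial of an $r$-dimensional space of forms (either $V_i$ or a differentiated version of it), \Cref{cor:nonzero} gives that this $\hat{W}$, and hence the Gram determinant, is not identically zero. This completes the argument. The main obstacle is the identification in the case $r_i<i+1$. As a fallback there, I would avoid $\hat{W}$ and use the $W$-polynomial through \Cref{cor:c}, together with B\^ocher's \Cref{lem:Bocher1} applied to the dehomogenized forms.
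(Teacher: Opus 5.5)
Your treatment of the degrees $i$ with $\dim A_i=i+1$ matches the paper's argument. There the Gram matrix is the full $i$-th Hessian, which is a nonzero multiple of $\hat{W}$ of the $i$-th partials of $F$. These partials are independent because $\dim A_i=i+1$, so \Cref{cor:nonzero} applies.

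The gap is in the flat range $t\le i\le\lfloor d/2\rfloor$, where $r=t<i+1$. Your identification there is actually exact and needs no transition matrix. In the basis $x^jy^{i-j}$, $0\le j\le r-1$, the Gram determinant equals $\hat{W}(H_0,\dots,H_{r-1})$ with $H_j=x^jy^{2i-r+1-j}\circ F$. However, \Cref{cor:nonzero} requires $H_0,\dots,H_{r-1}$ to be linearly independent, and you only assert that $y^{i-r+1}\circ V_i$ is $r$-dimensional. That independence says $\times y^{i-r+1}\colon A_i\to A_{2i-r+1}$ is injective. Since $\hat{W}$ of dependent forms vanishes identically, it is \emph{equivalent} to the Gram determinant being nonzero. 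So in these degrees the argument assumes exactly what it must prove.

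The same problem affects your claim that $y^{i-r+1}R_{r-1}$ meets $\Ann(F)_i$ trivially. Complementary dimensions alone do not force a trivial intersection, so ``a dimension count on the Hilbert function'' does not establish it. Your B\^ocher fallback needs the same independence, so it does not help.

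To close the gap you need the structure of $\Ann(F)$, not just its Hilbert function. By Macaulay, $\Ann(F)=(f,g)$ with $\deg f=t$ and $\deg g=d+2-t$, so $\Ann(F)_j=f\cdot R_{j-t}$ for $j\le d+1-t$. Choose coordinates with $y\nmid f$; then $y$ is a nonzerodivisor on $R/(f)$. Since $2i-t+1\le d+1-t$, both of your claims follow.

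The paper avoids the flat range entirely. It checks only $i\le t-1$, where $A_i=R_i$ and the Gram matrix is a nonzero multiple of $\hat{W}$ of the $i$-th partials, which span a space of dimension $i+1$. This suffices because for $t-1\le i\le\lfloor d/2\rfloor$ the map $\times\ell^{d-2(t-1)}\colon A_{t-1}\to A_{d-t+1}$ factors as $\times\ell^{i-t+1}$, then $\times\ell^{d-2i}$, then $\times\ell^{i-t+1}$, through spaces all of dimension $t$. If the composite is an isomorphism, $\times\ell^{d-2i}$ must be one too.
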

\begin{proof}
	According to a theorem of Macaulay (see \cite[Theorem 1.44]{IK}), the algebra $A_F$ is a complete intersection with Hilbert function of the form 
	$$H(A_F)=(1,2,\ldots,s(F)-1,s(F)^k,s(F)-1,\ldots,2,1).$$
	Thus it suffices to show that for each $r$ satisfying $1\leq r\leq s(F)\leq \flo{d}+1$, the multiplication map $\times\ell^{d-2(r-1)}\colon\left(A_F\right)_{r-1}\rightarrow\left(A_F\right)_{d-r+1}$ is an isomorphism.  Fix such an $r$ and let $s=d-r+1$, and define the $r$-dimensional subspace $$V=V^F_r=\operatorname{span}_{\C}\left\{\left.F_j=\frac{\partial^{r-1}F}{\partial X^{j-1}\partial Y^{r-j}}\right| 1\leq j\leq r\right\}\subset\C[X,Y]_s$$
	to be the subspace of partial derivatives of $F$ of order $(r-1)$.  Then one can show directly that 
	$$\hat{W}(F_1,\ldots,F_r)=(-1)^{\flo{r}}\cdot \det\left(\operatorname{Hess}_{r-1}(F)\right)$$
	where $\operatorname{Hess}_{r-1}(F)$ is the $(r-1)^{st}$ Hessian matrix for $F$, i.e.
	$$\operatorname{Hess}_{r-1}(F)=\left(\frac{\partial^{2(r-1)}F}{\partial X^{p+q-2}\partial Y^{2r-p-q}}\right)_{1\leq p,q\leq r}.$$  
	
	According to a theorem of J. Watanabe \cite{Watanabe}, the $(r-1)^{st}$ Hessian matrix evaluated at $(X,Y)=(a,b)$ is the matrix for the $(r-1)^{st}$ Lefschetz multiplication map $$\times(ax+by)^{d-2(r-1)}\colon \left(A_F\right)_{r-1}\rightarrow \left(A_F\right)_{d-r+1}.$$
	According to \Cref{cor:nonzero}, $\hat{W}(V^F_r;X,Y)=\hat{W}(F_1,\ldots,F_r;X,Y)$ is not identically zero, and hence the nonzero set $D^F_r=\left\{(a,b)\in\C^2 \ | \ \hat{W}(V^F_r;a,b)\neq 0\right\}$ is a non-empty Zariski open set.  Since this holds for each $r$ with $1\leq r\leq s(F)$, there must be some linear form in the intersection of finitely many non-empty Zariski open sets $\ell=ax+by\in \bigcap_{r=1}^{s(F)}D^F_r\subseteq \left(A_F\right)_1$ that satisfies the strong Lefschetz property on $A_F$.
	
	Alternatively, one can also give a dual proof using the $W$-polynomial, along the same lines as Iarrobino's original argument.  Indeed, define the ring automorphism $$\theta=\psi^{-1}\colon\C[x,y]\rightarrow\C[X,Y], \ \theta(x)=Y, \ \theta(y)=-X.$$ 
	Then one can show that
	$$V^\perp= \theta\left(\Ann(F)_{d+1-r}\right).$$
	Since $\dim_\C\left(\left(V^F_r\right)^\perp\right)=s+1-r=d-2(r-1)$, it follows from \Cref{prop:WZero} that, for a linear form $\ell\in R_1$, the multiplication map $\times\ell^{d-2(r-1)}\colon\left(A_F\right)_{r-1}\rightarrow\left(A_F\right)_{d-r+1}$ has nonzero kernel if and only if $\theta(\ell)$ is a factor of the $W$-polynomial $W\left(\left(V^F_r\right)^\perp;X,Y\right)$.  Since $W\left(\left(V^F_r\right)^\perp;X,Y\right)$ is not identically zero, it can only have finitely many linear factors.  It follows that there exists a linear form $\ell$ such that $\theta(\ell)$ is not a factor of $W\left(\left(V^F_r\right)^\perp;X,Y\right)$ for any $1\leq r\leq s(F)$, and this linear form must be strong Lefschetz for $A_F$.  	
\end{proof}

\section*{Acknowledgments}
The author gratefully acknowledges the anonymous referees for their careful reading and helpful remarks.  The author is also grateful to Tony Iarrobino, Pedro Macias Marques, Alexandra Seceleanu and Junzo Watanabe for many useful and inspiring discussions on Hessians and Wronskians and many other related topics.  

\bibliographystyle{amsplain}

\begin{thebibliography}{99}
	\bibitem{AC}
	Abdesselam, A., Chipalkatti, J.,
	\emph{On the Wronskian combinants of binary forms},
	Journal of Pure and Applied Algebra, 210, 43--61, (2007).
	
	\bibitem{Bocher}
	Bôcher, M.,
	\emph{Certain Cases in Which the Vanishing of the Wronskian Is a Sufficient Condition for Linear Dependence}. 
	Transactions of the American Mathematical Society 2, no. 2 (1901).
	
	\bibitem{EH}
	Eisenbud, D., Harris, J.,
	\emph{Divisors on general curves and cuspidal rational curves},
	Invent. math., 74, 371--418 (1983).
	
	\bibitem{Fulton}
	Fulton, W.,
	\emph{Intersection Theory, second edition}.
	Springer, 1998.
	
	\bibitem{Gessel}
	Gessel, I.,
	\emph{Symmetric functions and P-recursiveness},
	Journal of Combinatorial Theory, Series A,
	Volume 53, Issue 2,
	1990,
	Pages 257-285.
	
	\bibitem{GV}
	Gessel, I., Viennot, X.,
	\emph{Binomial determinants, paths and hook length formulae},
	\textit{I. Gessel} and \textit{G. Viennot}, Adv. Math. 58, 300--321 (1985).
	
	\bibitem{I}
	Iarrobino, A.,
	\emph{Associated Graded Algebra of a Gorenstein Artin Algebra}.
	Memoirs of the AMS, Vol. 107, No. 514, Providence, RI (1994)
	
	\bibitem{IK}
	Iarrobino, A., Kanev, V.,
	\emph{Power sums, Gorenstein algebras, and determinantal loci. With an appendix `The Gotzmann theorems and the Hilbert scheme' by Anthony Iarrobino and Steven L. Kleiman}. Lecture Notes in Mathematics, 1721, Springer (1999).
	
	\bibitem{IY}
	Iarrobino, A., Yam\'eogo, J.,
	\emph{The Family G T of Graded Artinian Quotients of $k[x, y]$ of Given Hilbert Function}. 
	Communications in Algebra, 31(8), 3863–3916 (2003).
	
	\bibitem{Karp0}
	Karp, S.,
	\emph{Sign variation, the Grassmannian, and total positivity},
	Journal of Combinatorial Theory, Series A,
	Volume 145,
	Pages 308-339, (2017).
	
	\bibitem{Karp}
	Karp, S.,
	\emph{Wronskians, total positivity, and real Schubert calculus}. 
	Sel. Math. New Ser. 30, 1 (2024). 
	
	\bibitem{KP}
	Karp, S., Purbhoo, K.,
	\emph{Universal Pl\"ucker Coordinates for the Wronski Map and Positivity in Real Schubert Calculus} (preprint).
	arXiv:2309.04645 (2023).
	
	\bibitem{L}
	Lindstr\"om, B.,
	\emph{On the Vector Representations of Induced Matroids},
	Bulletin of The London Mathematical Society, 5(1), 85–90 (1973). 
	
	\bibitem{MMS}
	Macias Marques, P., McDaniel, C., Seceleanu, A., 
	\emph{Higher Lorentzian Polynomials, Higher Hessians, and the Hodge–Riemann Relations for Graded Oriented Artinian Gorenstein Algebras in Codimension Two}, 
	International Mathematics Research Notices, Volume 2025, Issue 13, July 2025.
	
	\bibitem{Errata}
	Macias Marques, P., McDaniel, C., Seceleanu, A., 
	\emph{Errata to ``Higher Lorentzian Polynomials, Higher Hessians, and the Hodge–Riemann Relations for Graded Oriented Artinian Gorenstein Algebras in Codimension Two''} (preprint)
	
	\bibitem{Pasch}
	Pasch, M., 
	\emph{Note uber die Determinanten, welche aus Functionen und deren Differentialen gebildet werden}, J. Reine Angew. Math., 80, 177--182, (1875).
	
	\bibitem{Purbhoo}
	Purbhoo, K.,
	\emph{Jeu de taquin and a monodromy problem for Wronskians of polynomials},
	Adv. Math. 224, No. 3, 827--862 (2010).
	
	\bibitem{Stanley}
	Stanley, R.,
	\emph{Enumerative Combinatorics Volume 2}
	Cambridge Studies in Advanced Mathematics 62, Cambridge University Press, 2005.
	
	\bibitem{Watanabe}
	Watanabe, J.,
	\emph{A remark on the Hessian of homogeneous polynomials}, The curves seminar at Queen’s, vol. XIII, Queen’s Papers in Pure and Appl. Math., vol. 119, 2000,
		Queen’s Univ. Kingston, ON, pp. 171–178.
	
	
\end{thebibliography}

\end{document}